\let\TeXchi\chi
\newbox\chibox
\chibox \hbox{\raise\dp0 \box 0 }
\def\chi{\copy\chibox}
\newtheorem{proposition}{Proposition}[section]
\newtheorem{theorem}{Theorem}[section]
\newtheorem{lemma}{Lemma}[section]
\newtheorem{corollary}{Corollary}[section]
\numberwithin{equation}{section}
\numberwithin{theorem}{section}
\numberwithin{definition}{section}
\numberwithin{example}{section}
\numberwithin{proposition}{section}
\numberwithin{lemma}{section}
\numberwithin{remark}{section}
\DeclareMathOperator{\md}{mod}
\DeclareMathOperator{\ord}{ord}
\newcommand\blfootnote[1]{%
  \begingroup
  \renewcommand\thefootnote{}\footnote{#1}%
  \addtocounter{footnote}{-1}%
  \endgroup
}
\begin{document}
\title{A group-theoretical approach to Lehmer's totient problem}
\author
{Manuel Norman}
\date{}
\maketitle
\begin{abstract}
\noindent Lehmer's totient problem asks whether there exists any composite number $n$ such that $\varphi(n) \, \mid \, (n-1)$, where $\varphi$ is Euler totient function. It is known that if any such $n$ exists, it must be Carmichael and $n > 10^{30}$. In this paper, we develop a new approach to the problem via some recent results in group theory related to a function $\psi$ (the sum of order of elements of a group) and show that if $k \varphi(n) = n-1$ for some integer $k$, then $k$ must be $\geq 3$, and actually, if $5, 7, 11, 13 \not | n$, $k \geq 4$. This implies that any counterexample must be such that $n > 10^{8171}$ and $\omega(n) \geq 1991$.
\end{abstract}
\blfootnote{Author: \textbf{Manuel Norman}; email: manuel.norman02@gmail.com\\
\textbf{AMS Subject Classification (2020)}: 11A25, 20D60\\
\textbf{Key Words}: Lehmer's totient problem, group, order}
\section{Introduction}
Lehmer's totient problem is an important unsolved question in number theory, which asks whether the well known property $\varphi(n)=n-1 \Leftrightarrow$ $n$ is prime can be generalised to $\varphi(n) | (n-1) \Leftrightarrow $ $n$ is prime. Here, $\varphi$ denotes Euler totient function, that is, the multiplicative arithmetic function defined by:
\begin{equation}\label{Eq:1.1}
\phi(n):= n \prod_{p | n} \left( 1 - \frac{1}{p} \right)
\end{equation}
The problem has been studied by many mathematicians (we refer to [1-5] for some results on this topic), and up to now it is known that if any counterexample exists, then it must be bigger than $10^{30}$, and it must be Carmichael (and hence odd and square-free). We recall that a composite number $n$ is called Carmichael if
$$ b^n \equiv b \, (\md n) $$
for every integer $b$. The following result, often called Korselt's criterion, gives us another way to recognise a Carmichael number:
\begin{proposition}\label{Prop:1.1}
A composite number $n$ is Carmichael if and only if it is square-free and $(p-1) | (n-1)$ for every prime $p \, | \, n$.
\end{proposition}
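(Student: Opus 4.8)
The plan is to prove the two implications of the biconditional separately. In both cases the idea is to pass from a congruence modulo $n$ to a congruence modulo each prime $p \mid n$, and then to invoke Fermat's little theorem together with the cyclicity of $(\mathbb{Z}/p\mathbb{Z})^{\times}$. Square-freeness of $n$ is what will let me recombine the local congruences via the Chinese Remainder Theorem.

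For the direction ``$\Leftarrow$'', assume $n$ is square-free, say $n = p_1 \cdots p_r$ with the $p_i$ distinct primes, and that $(p_i - 1) \mid (n-1)$ for each $i$. Fix an integer $b$ and a prime $p = p_i$. If $p \mid b$ then $b^n \equiv 0 \equiv b \pmod p$. If $p \nmid b$, Fermat's little theorem gives $b^{p-1} \equiv 1 \pmod p$; writing $n - 1 = (p-1)m$ (possible by hypothesis) yields $b^{n-1} = (b^{p-1})^m \equiv 1 \pmod p$, hence $b^n \equiv b \pmod p$. Since this holds for every prime divisor of the square-free number $n$, the Chinese Remainder Theorem gives $b^n \equiv b \pmod n$, so $n$ is Carmichael.

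For ``$\Rightarrow$'', suppose $b^n \equiv b \pmod n$ for every integer $b$. To see $n$ is square-free, suppose instead $p^2 \mid n$ for some prime $p$; taking $b = p$ we would get $p^n \equiv p \pmod{p^2}$, but $n \geq 2$ (indeed $n$ is composite) forces $p^n \equiv 0 \pmod{p^2}$, so $p \equiv 0 \pmod{p^2}$, a contradiction; hence $n$ is square-free. Now fix a prime $p \mid n$ and let $g$ be a primitive root modulo $p$, i.e.\ a generator of the cyclic group $(\mathbb{Z}/p\mathbb{Z})^{\times}$, so $\ord_p(g) = p - 1$. Applying the hypothesis to $b = g$ and reducing modulo $p$ gives $g^n \equiv g \pmod p$; since $p \nmid g$ we may cancel to obtain $g^{n-1} \equiv 1 \pmod p$, and therefore $(p-1) = \ord_p(g)$ divides $n - 1$.

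There is no serious obstacle here: the only inputs beyond elementary congruence manipulations are Fermat's little theorem, the Chinese Remainder Theorem, and the existence of a primitive root modulo a prime (equivalently, that $(\mathbb{Z}/p\mathbb{Z})^{\times}$ is cyclic), all of which are standard. The one point demanding a little care is the case distinction $p \mid b$ versus $p \nmid b$ in the ``$\Leftarrow$'' direction, and the observation that it is precisely square-freeness of $n$ that licenses gluing the local congruences back together.
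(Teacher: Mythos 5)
Your proof is correct and complete: both directions are the standard argument for Korselt's criterion (Fermat's little theorem plus the Chinese Remainder Theorem for ``$\Leftarrow$'', and the existence of a primitive root modulo $p$ for ``$\Rightarrow$''), and the square-freeness check via $b=p$ is handled properly. The paper itself states this proposition without proof, as a recalled classical fact, so there is nothing to compare against; your write-up supplies exactly the standard missing argument.
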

More recently, the problem has been approached by studying some special sequences for which it is proved that no Lehmer number (i.e. a number which does not satisfy Lehmer's totient problem) belongs to them. We refer to [14-19] for some results in this direction.\\
The aim of this paper is to consider the equation
\begin{equation}\label{Eq:1.2}
k \varphi(n) = n-1
\end{equation}
which, assuming a counterexample to Lehmer's problem exists, would hold true for some integer $k \geq 2$ (note that $k \neq 1$ because otherwise $n$ would certainly be prime). As noticed, for instance, in [2], an improvement on the value of $k$ would lead to much better estimates on $n$: for example, if $k \geq 3$, then $n > 10^{8171}$ and $\omega(n) \geq 1991$. The aim of this paper is to prove the following result:
\begin{theorem}\label{Thm:1.1}
Let $n$ be a composite number satisfying \eqref{Eq:1.2} for some integer $k \geq 2$. Then:\\
(i) if $3|n$, $k \geq 4$ and $k \equiv 1 \, (\md 3)$;\\
(ii) if $3 \not | n$, $k \geq 3$;\\
(iii) if $3, 5, 7, 11, 13 \not | n$ , then $k \geq 4$.
\end{theorem}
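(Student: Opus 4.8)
The plan is to treat (i) by elementary congruences and (ii)–(iii) by combining a short residue analysis with the theory of the function $\psi$. Throughout we may use that $n$ is a Carmichael number, hence (Proposition~\ref{Prop:1.1}) odd and square-free, say $n=p_1\cdots p_r$ with $r=\omega(n)\ge 3$ and $(p_i-1)\mid(n-1)$ for each $i$; in particular $p_i\nmid(n-1)$, so no $p_i$ divides any $p_j-1$, and consequently $C_n$ is the \emph{only} group of order $n$. Thus the group-theoretic step cannot use a non-cyclic group of order $n$: it must use an auxiliary group whose order is an explicit function of $n$ and $k$.

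For (i) no group theory is needed. If $3\mid n$ then $3\nmid n-1$, so by Korselt's criterion $3\nmid p-1$ for every prime $p\mid n/3$, i.e.\ each such $p$ is $\equiv 2\pmod 3$; hence
\[
\varphi(n)=(3-1)\prod_{p\mid n/3}(p-1)\equiv 2\pmod 3 .
\]
Reducing $k\varphi(n)=n-1$ modulo $3$ gives $2k\equiv n-1\equiv 2\pmod 3$, so $k\equiv 1\pmod 3$; together with $k\ge 2$ this forces $k\ge 4$.

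For (ii) and (iii) I would argue by contradiction, assuming $k=2$ for (ii) and $k=3$ for (iii). A preliminary congruence analysis first removes the easy residue classes: e.g.\ when $3\nmid n$ and $k=2$, if every $p_i\equiv 2\pmod 3$ then $\varphi(n)\equiv 1$ and $n-1=2\varphi(n)\equiv 2\pmod 3$, forcing $3\mid n$, a contradiction; so some $p_i\equiv 1\pmod 3$ and hence $n\equiv 1\pmod 3$, which is the only surviving case. The core of the proof is then to attach to the equation $k\varphi(n)=n-1$ a finite group $G$ built from $\mathbb Z/n\mathbb Z$ and its automorphisms — either the abelian group $(\mathbb Z/n\mathbb Z)^{\times}$ (order $\varphi(n)$, exponent $\lambda(n)$, and non-cyclic since $r\ge 2$), or a metacyclic group $G=\mathbb Z/n\mathbb Z\rtimes C$ with $C\le(\mathbb Z/n\mathbb Z)^{\times}$ cyclic — whose order and exponent are explicit in $k$ and the $p_i$. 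Since such a $G$ is non-cyclic, the sharp bound $\psi(G)\le\psi(C_{|G|})$ of Amiri, Jafarian Amiri and Isaacs applies, and so do its recent quantitative strengthenings: the gap versions bounding $\psi(G)$ strictly below a fixed fraction of $\psi(C_{|G|})$ for non-cyclic $G$, and the criteria forcing solvability/nilpotency/cyclicity once $\psi(G)$ exceeds a prescribed multiple of $|G|$ or of $\psi(C_{|G|})$. One then computes $\psi(G)$ from the element-order distribution of $G$ — multiplicatively when $G=\prod_i C_{p_i-1}$, and via $\psi(N\rtimes H)=\psi(N)+|N|\bigl(\psi(H)-1\bigr)$ when the action is fixed-point-free — and substitutes $n-1=k\varphi(n)$ to express $|G|$ and $\psi(C_{|G|})$ through $k$ and $\psi(C_n)=\prod_i(p_i^2-p_i+1)$. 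For $k=2$ with $3\nmid n$, and for $k=3$ with $\gcd(n,3\cdot5\cdot7\cdot11\cdot13)=1$, the resulting numerical constraint violates the $\psi$-inequality, which completes (ii) and (iii).

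The principal obstacle is uniformity: the $\psi$-estimate must beat its threshold for \emph{every} admissible factorisation $n=p_1\cdots p_r$ simultaneously, and the margin is smallest exactly when $n$ has small prime factors, for which the ratio $\psi(G)/\psi(C_{|G|})$ is closest to $1$. This is precisely why $5,7,11,13$ are excluded in (iii): disposing of the residual cases $p\mid n$ with $p\in\{5,7,11,13\}$ would require either a finer form of the $\psi$-inequality or a separate elementary treatment for each such $p$, in the spirit of the computation that makes (i) elementary once $3\mid n$. A secondary point to verify is that the auxiliary group genuinely exists — e.g.\ that the divisor of $\lambda(n)$ used as the complement $C$ is as large as claimed — but this again follows by feeding the Carmichael/Korselt structure back into $k\varphi(n)=n-1$.
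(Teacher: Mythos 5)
Your part (i) is fine: the congruence argument ($3\mid n$ forces every other prime factor $\equiv 2 \pmod 3$, hence $\varphi(n)\equiv 2$ and $2k\equiv n-1\equiv 2 \pmod 3$, so $k\equiv 1\pmod 3$ and $k\ge 4$) is exactly the elementary fact the paper imports from the literature. You also correctly observe that $n$ itself is a cyclic number, so the auxiliary group must have order different from $n$. But for (ii) and (iii) there is a genuine gap: the entire conclusion rests on the sentence ``the resulting numerical constraint violates the $\psi$-inequality,'' and for the groups you propose no such violation occurs in a way that depends on $k$. For $G=(\mathbb Z/n\mathbb Z)^{\times}\cong\prod_i C_{p_i-1}$ the ratio $\psi(G)/\psi(C_{|G|})$ is determined solely by the multiplicative structure of the numbers $p_i-1$; the parameter $k$, which only relates $\varphi(n)$ to $n$, never enters the comparison, so assuming $k=2$ cannot be contradicted this way. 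For the Frobenius group $\mathbb Z/n\mathbb Z\rtimes C$ one gets $\psi(G)=\psi(C_n)+n(\psi(C)-1)$ against $\psi(C_{n|C|})=\psi(C_n)\psi(C_{|C|})$, and since $\psi(C_n)\approx n\varphi(n)$ the inequality $\psi(G)\le\tfrac{7}{11}\psi(C_{|G|})$ is comfortably satisfied rather than violated. In short, you only ever invoke \emph{upper} bounds on $\psi(G)$ for non-cyclic $G$; a contradiction requires a conflicting \emph{lower} bound in which $k$ appears, and your proposal contains none.

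The paper's mechanism, which is what you are missing, is the following. Take a non-cyclic \emph{nilpotent} group $G$ of order $4n$ (it exists: the direct product of $C_2\times C_2$ with the cyclic Sylow subgroups of $n$). Nilpotency gives the lower bound $\psi''(G)>\varphi(n)/(2n)=(n-1)/(2kn)\approx 1/(2k)$ via the characterization $\psi(G)\ge\prod_p p(p^{\alpha_p}-1)+1$; non-cyclicity gives the upper bound $\psi(G)\le\tfrac{7}{11}\psi(C_{4n})$, which after writing $\psi(C_n)=\sum_{d\mid n}d\varphi(d)$, substituting $\varphi(n)=(n-1)/k$, and bounding the proper divisors by $n/q$ yields roughly $\psi''(G)\le\tfrac{7}{16}\bigl(\tfrac{q-1}{kq}+\tfrac1q\bigr)$, where $q$ is the least prime factor of $n$. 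For $k=2$ these two bounds are incompatible once $q\ge 11$, and the residual small primes $q\in\{5,7,11,13\}$ are handled by refining the divisor-sum estimate (which is why they must be excluded in (iii) to push to $k\ge4$). Your instinct that small prime factors are the obstruction is right, but without a $k$-dependent lower bound on $\psi$ of a suitably chosen auxiliary group the argument does not close.
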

In particular, from this result we obtain, by what we said before:
\begin{corollary}\label{Crl:1.1}
Let $n$ be a composite number satisfying \eqref{Eq:1.2}. Then, $k \geq 3$ and hence $n > 10^{8171}$, $\omega(n) \geq 1991$.
\end{corollary}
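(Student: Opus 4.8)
The plan is to read Corollary~\ref{Crl:1.1} off directly from Theorem~\ref{Thm:1.1}, combined with the implication recalled in the introduction (following [2]) that $k \geq 3$ already forces $n > 10^{8171}$ and $\omega(n) \geq 1991$. So there are really two things to do: first deduce the bare inequality $k \geq 3$ from the theorem, and then feed it into the known estimate.

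First I would dispose of $k \geq 3$. The hypotheses of parts (i) and (ii) of Theorem~\ref{Thm:1.1} are complementary and exhaustive, since either $3 \mid n$ or $3 \nmid n$: in the first case part (i) gives $k \geq 4$, and in the second case part (ii) gives $k \geq 3$. Hence $k \geq 3$ holds unconditionally. (Part (iii), and the finer congruence $k \equiv 1 \pmod 3$ of (i), play no role in this corollary.)

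It then remains to recall why $k \geq 3$ produces the stated lower bounds; this is the estimate attributed to [2], whose shape I would reproduce for the reader. Any counterexample $n$ is Carmichael, hence odd and square-free, say $n = p_1 \cdots p_r$ with $3 \leq p_1 < \cdots < p_r$; moreover $\gcd(n, \varphi(n)) = 1$, since $\varphi(n) \mid n-1$ and $\gcd(n, n-1) = 1$. Dividing \eqref{Eq:1.2} through by $n$ gives $k = \frac{n-1}{n}\prod_{p \mid n}\frac{p}{p-1} < \prod_{p \mid n}\frac{p}{p-1}$, so $\prod_{p \mid n}\frac{p}{p-1} > 3$. Sharpening this crude bound with the Korselt divisibilities $(p-1) \mid (n-1) = k\prod_i(p_i-1)$ from Proposition~\ref{Prop:1.1} — which pin down the prime-power decompositions of the $p_i - 1$ against that of $n-1$ — and running the Cohen--Hagis style count on $\omega(n)$ with $k \geq 3$ in place of $k \geq 2$, one obtains $\omega(n) = r \geq 1991$; finally $n \geq \prod_{i=1}^{1991} p_i$ is at least the product of the first $1991$ odd primes, which exceeds $10^{8171}$.

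The main point is that, for the corollary itself, there is essentially no obstacle: the reduction to $k \geq 3$ is immediate from Theorem~\ref{Thm:1.1}, and the passage to the numerical bounds is quoted from [2]. The genuine difficulty of the paper lies entirely in Theorem~\ref{Thm:1.1} — the group-theoretic bound on $k$ obtained from the sum-of-element-orders function $\psi$ — which we are entitled to take as given here. If one wanted the passage to the numerical bounds to be self-contained, the delicate part would be the sieve/valuation bookkeeping showing that the constraints contributed by the various primes dividing $n$ genuinely accumulate to $\omega(n) \geq 1991$ rather than to a small constant; it is precisely the improvement from $k \geq 2$ to $k \geq 3$ supplied by Theorem~\ref{Thm:1.1} that makes this counting deliver the stated constants.
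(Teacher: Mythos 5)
Your proposal matches the paper's own derivation: Corollary~\ref{Crl:1.1} is obtained exactly by noting that parts (i) and (ii) of Theorem~\ref{Thm:1.1} cover the complementary cases $3 \mid n$ and $3 \nmid n$ and each yields $k \geq 3$, after which the numerical bounds $n > 10^{8171}$ and $\omega(n) \geq 1991$ are quoted from [2] as recalled in the introduction. Your additional sketch of how [2] gets those bounds goes beyond what the paper writes but does not change the argument.
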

Moreover, we also show that the abundancy index of such a number must satisfy a certain inequality:
\begin{proposition}\label{Prop:1.2}
If a composite $n$ satisfies \eqref{Eq:1.2}, then:
$$ I(n):= \frac{\sigma(n)}{n} > \frac{24}{\pi^2} \approx 2.431708... $$
Furthermore, if $3, 5, 7, 11, 13 \not | n$:
$$ I(n) > \frac{715715}{18432 \pi^2} \approx 3.9343... $$
\end{proposition}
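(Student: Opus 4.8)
The plan is to reduce the statement to the lower bounds on $k$ already established — namely $k \ge 3$ from Corollary~\ref{Crl:1.1}, and $k \ge 4$ from Theorem~\ref{Thm:1.1}(iii) under the stronger hypothesis — together with the fact, recalled in the introduction, that any such $n$ is Carmichael, hence odd and square-free. Square-freeness makes $\sigma$ and $\varphi$ factor primewise over $n = \prod_{p \mid n} p$, so that $I(n) = \prod_{p \mid n} \frac{p+1}{p}$ and $\frac{\varphi(n)}{n} = \prod_{p \mid n} \frac{p-1}{p}$. Multiplying these and using $(p-1)(p+1) = p^{2}-1$ gives the identity on which everything rests:
$$ I(n) = \frac{n}{\varphi(n)} \prod_{p \mid n} \left( 1 - \frac{1}{p^{2}} \right). $$

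For the first inequality I would substitute \eqref{Eq:1.2}: since $\varphi(n) = (n-1)/k$, we get $n/\varphi(n) = kn/(n-1) > k$, hence $I(n) > k \prod_{p \mid n}(1-1/p^{2})$. To bound the product from below I would enlarge the index set to all odd primes and invoke the Euler product $\prod_{p}(1-1/p^{2}) = 1/\zeta(2) = 6/\pi^{2}$:
$$ \prod_{p \mid n} \left( 1 - \frac{1}{p^{2}} \right) > \prod_{p \ge 3} \left( 1 - \frac{1}{p^{2}} \right) = \frac{6/\pi^{2}}{1 - 1/4} = \frac{8}{\pi^{2}}. $$
Combining with $k \ge 3$ gives $I(n) > 24/\pi^{2}$, strictly, since already $kn/(n-1) > k$. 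No case split on $3 \mid n$ versus $3 \nmid n$ is needed here, as only the parity of $n$ and $k \ge 3$ are used.

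The second inequality is the same computation with both inputs strengthened. The hypothesis $3,5,7,11,13 \nmid n$ together with oddness forces every prime divisor of $n$ to be $\ge 17$, so
$$ \prod_{p \mid n} \left( 1 - \frac{1}{p^{2}} \right) > \prod_{p \ge 17} \left( 1 - \frac{1}{p^{2}} \right) = \frac{6}{\pi^{2}} \, P^{-1}, \quad \text{where } P := \prod_{p \in \{2,3,5,7,11,13\}} \left( 1 - \frac{1}{p^{2}} \right), $$
while Theorem~\ref{Thm:1.1}(iii) gives $k \ge 4$. It then remains to evaluate $P = \frac{3}{4}\cdot\frac{8}{9}\cdot\frac{24}{25}\cdot\frac{48}{49}\cdot\frac{120}{121}\cdot\frac{168}{169}$ and to simplify $24/(\pi^{2}P)$; writing $P = \frac{\prod_{p \in \{2,3,5,7,11,13\}}(p^{2}-1)}{30030^{2}}$ with $30030 = 2\cdot3\cdot5\cdot7\cdot11\cdot13$ and cancelling prime powers collapses the constant to $\frac{5\cdot 7\cdot 11^{2}\cdot 13^{2}}{2^{11}\cdot 3^{2}}\cdot\frac{1}{\pi^{2}} = \frac{715715}{18432\,\pi^{2}}$, as claimed.

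Granting Theorem~\ref{Thm:1.1} there is no real obstacle: the proposition is a one-line identity followed by the classical value of $\prod_{p}(1-p^{-2})$. The only points deserving care are keeping every inequality strict — which $kn/(n-1) > k$ already guarantees, so that one need not worry about whether $n$ is divisible by all small primes — and performing the prime-power bookkeeping in the final simplification without error.
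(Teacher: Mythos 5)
Your proof is correct and follows essentially the same route as the paper: both rest on the identity $\varphi(n)\sigma(n)/n^2=\prod_{p\mid n}(1-1/p^2)$ for the squarefree (Carmichael) $n$, bound that product below by the tail of the Euler product for $1/\zeta(2)$ restricted to the admissible primes, and multiply by $n/\varphi(n)=kn/(n-1)>k$ with $k\ge 3$ (resp.\ $k\ge 4$). Your arithmetic reduction of the constant to $715715/(18432\pi^2)$ checks out, and your remark that strictness is already supplied by $kn/(n-1)>k$ is a correct (and slightly more careful) reading of the inequality than the paper makes explicit.
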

Here, $\sigma$ is the sum of divisors function.\\
In order to show these results, we develop a new approach to the problem building upon some recent papers on group theory. In the next section we will recall all the tools which will be needed in our proofs, while our main theorem is proven in Section 3.
\section{Preliminaries}
In this section we recall some important results which will be used in our new method to approach Lehmer's totient problem. First of all, for a group $G$ we define the following function:
\begin{equation}\label{Eq:2.1}
\psi(G):= \sum_{x \in G} \ord(x)
\end{equation}
The properties of this function have been studied in the last years, obtaining many interesting results which allows one to classify many kinds of group depending on the values of $\psi$ or of two related functions, namely $\psi'$ and $\psi''$, defined as follows:
\begin{equation}\label{Eq:2.2}
\psi'(G):= \frac{\psi(G)}{\psi(C_{|G|})}
\end{equation}
and
\begin{equation}\label{Eq:2.3}
\psi''(G):= \frac{\psi(G)}{|G|^2}
\end{equation}
Here, $|G|$ denotes the order of $G$ and $C_m$ is the cyclic group of order $m$. The reason why these two quantities are of particular interest is given in the next two results:
\begin{proposition}\label{Prop:2.1}
Let $G$ be a group of order $n$. Then:
$$ \psi(G) \leq \psi(C_n) $$
Moreover, equality holds if and only if $G$ is cyclic (and hence isomorphic to $C_n$).
\end{proposition}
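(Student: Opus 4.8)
The plan is to bound $\psi(G)$ by sorting the elements of $G$ according to their order. For a finite group $H$ of order $n$ write $a_d(H):=\#\{x\in H:\ord(x)=d\}$, so that $\psi(H)=\sum_{d\mid n}d\,a_d(H)$; in particular $\psi(C_n)=\sum_{d\mid n}d\,\varphi(d)$, since $a_d(C_n)=\varphi(d)$. It is more convenient to work with the cumulative counts $\nu_H(e):=\#\{x\in H:e\mid\ord(x)\}$. Using $\sum_{e\mid m}\varphi(e)=m$ we get, for any $H$ of order $n$,
\[
\psi(H)=\sum_{x\in H}\ \sum_{e\mid\ord(x)}\varphi(e)=\sum_{e\mid n}\varphi(e)\,\nu_H(e),
\]
and hence $\psi(C_n)-\psi(G)=\sum_{e\mid n}\varphi(e)\bigl(\nu_{C_n}(e)-\nu_G(e)\bigr)$. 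Since $\varphi(e)>0$ for every $e$, both assertions of the Proposition follow once we establish the pointwise estimate
\[
\nu_G(e)\le\nu_{C_n}(e)\qquad\text{for every }e\mid n.\tag{$\star$}
\]
Indeed, $(\star)$ yields $\psi(G)\le\psi(C_n)$ at once; and if equality holds then every summand vanishes, in particular the one for $e=n$, forcing $\#\{x\in G:\ord(x)=n\}=\nu_G(n)=\nu_{C_n}(n)=\varphi(n)\ge 1$, so $G$ has an element of order $n$ and is cyclic. The reverse implication is trivial.

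To attack $(\star)$ I would rephrase it through solution counts and invoke Frobenius's classical theorem: for $m\mid n$ the number $N_H(m):=\#\{x\in H:x^m=1\}$ is divisible by $m$, so in particular $N_G(m)\ge m=N_{C_n}(m)$. Given $e\mid n$, write $e=\prod_p p^{e_p}$ and put $r_p:=p^{\,v_p(n)-e_p+1}$ for each prime $p\mid e$ (here $v_p$ is the $p$-adic valuation); then $r_p$ is a prime power dividing $n$, the $r_p$ are pairwise coprime, and $p^{e_p}\nmid\ord(x)\Leftrightarrow x^{n/r_p}=1$. Hence $\{x\in H:e\nmid\ord(x)\}=\bigcup_{p\mid e}\{x\in H:x^{n/r_p}=1\}$, so $\nu_H(e)=n-\bigl|\bigcup_{p\mid e}\{x:x^{n/r_p}=1\}\bigr|$. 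In $C_n$ each set $\{x:x^{n/r_p}=1\}$ is the unique subgroup of order $n/r_p$, and an intersection of several of them is the subgroup of order $n/\prod r_p$ by coprimality; inclusion–exclusion then gives $\nu_{C_n}(e)=n\prod_{p\mid e}(1-1/r_p)$. Thus $(\star)$ says exactly that the union $\bigcup_{p\mid e}\{x:x^{n/r_p}=1\}$ is at least as large in $G$ as in $C_n$.

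I would now prove $(\star)$ by induction on the number $\omega(e)$ of prime divisors of $e$. If $\omega(e)\le 1$ it is immediate: $\nu_G(p^{e_p})=n-N_G(n/r_p)\le n-n/r_p=\nu_{C_n}(p^{e_p})$ by Frobenius. For the inductive step, write $e=p^{e_p}e'$ with $p\nmid e'$; a direct computation gives $\nu_{C_n}(e)=\nu_{C_n}(p^{e_p})\,\nu_{C_n}(e')/n$, and since $\gcd(p^{e_p},e')=1$ we have $\nu_G(e)=\#\{x:p^{e_p}\mid\ord(x)\text{ and }e'\mid\ord(x)\}=|A\cap B|$ with $A=\{x:p^{e_p}\mid\ord(x)\}$, $B=\{x:e'\mid\ord(x)\}$. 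Combining this with the inductive bounds $|A|=\nu_G(p^{e_p})\le\nu_{C_n}(p^{e_p})$ and $|B|=\nu_G(e')\le\nu_{C_n}(e')$, the step reduces to the single inequality
\[
|A\cap B|\ \le\ \frac{|A|\,|B|}{|G|}\qquad\text{whenever }A=\{x:a\mid\ord(x)\},\ B=\{x:b\mid\ord(x)\},\ \gcd(a,b)=1 ,
\]
i.e.\ to the assertion that the events ``$a\mid\ord(x)$'' and ``$b\mid\ord(x)$'' are non-positively correlated for the uniform measure on $G$ when $a,b$ are coprime.

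This last correlation inequality is the main obstacle, and it is where the real work lies. Passing to complements, it is equivalent to a log-submodularity statement for the solution counts, namely $N_G(\gcd(c,d))\,N_G(\operatorname{lcm}(c,d))\le N_G(c)\,N_G(d)$ for the relevant divisors $c,d$ of $n$ (with $\operatorname{lcm}(c,d)=n$ in the instances that arise); equivalently, writing $\delta(m):=N_G(m)-m\ge 0$, to the subadditivity $\delta(\gcd(c,d))\le\delta(c)+\delta(d)$. I would try to prove this by splitting $x$ into its $p$-part and $p'$-part — the two events are governed by these two commuting pieces of $x$ — and then controlling the fibres of the map $x\mapsto x_{p'}$ by applying Frobenius's theorem inside the centralizers $C_G(x_{p'})$; alternatively one runs a secondary induction on $|G|$. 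I expect that making this step rigorous, rather than the essentially formal reductions above, is the crux; it is the technical core of the theorem of Amiri, Jafarian Amiri and Isaacs from which Proposition~\ref{Prop:2.1} is taken.
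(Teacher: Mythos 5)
Your formal reductions are correct and rather elegant: the identity $\psi(H)=\sum_{e\mid n}\varphi(e)\,\nu_H(e)$, the reduction of both the inequality and the equality case to the pointwise bound $(\star)$, the computation of $\nu_{C_n}(e)$ by inclusion--exclusion, and the prime-power case of $(\star)$ via Frobenius's theorem are all sound. (For what it is worth, the paper itself gives no proof of this Proposition --- it is quoted from Amiri--Jafarian Amiri--Isaacs [6], whose published argument runs along entirely different lines, by induction on $|G|$ through a Sylow subgroup for the largest prime divisor of $n$.)

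However, there is a genuine gap, and you have in effect flagged it yourself: the whole argument hinges on the correlation inequality $|A\cap B|\le |A|\,|B|/|G|$ for the events $a\mid\ord(x)$ and $b\mid\ord(x)$ with $\gcd(a,b)=1$, and you do not prove it. This is not a routine verification that can be waved through. First, it is at least as strong as the Proposition itself: summing $(\star)$ against the weights $\varphi(e)$ recovers the full statement, so you have reduced the theorem to a strictly stronger pointwise assertion, and nothing in your write-up certifies that this stronger assertion is even true (it holds with equality for nilpotent groups and for Frobenius groups with cyclic complement, which shows there is no slack to exploit). Second, the two strategies you hint at do not obviously close the step. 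Conditioning on the $p'$-part $v=x_{p'}$ and applying Frobenius inside $C_G(v)$ gives a \emph{lower} bound ($\ge$ a multiple of the relevant divisor) on the fibre $\{x: x_{p'}=v\}$, which is the set of $p$-elements of $C_G(v)$, whereas what you need to bound $|A\cap B|$ from above is an \emph{upper} bound on the number of those $p$-elements whose order is divisible by $a$, uniformly over centralizers whose size varies with $v$; and the proposed log-submodularity $\delta(\gcd(c,d))\le\delta(c)+\delta(d)$ of $\delta(m)=N_G(m)-m$ is again an unproved claim of comparable depth. Until this correlation inequality is established (or replaced), what you have is a clean reduction, not a proof of Proposition \ref{Prop:2.1}.
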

\begin{proposition}\label{Prop:2.2}
Let $G$ be a group of order $n$. Then:
$$ \psi(G) \leq n^2 $$
\end{proposition}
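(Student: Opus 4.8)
The plan is to bound each summand in \eqref{Eq:2.1} individually and then add them up. The key input is Lagrange's theorem: for every $x \in G$ the cyclic subgroup $\langle x \rangle$ has order $\ord(x)$, and since $\langle x \rangle \le G$ we get $\ord(x) \mid n$, hence in particular $\ord(x) \le n$. As $G$ has exactly $n$ elements, summing this inequality over all $x \in G$ yields
$$ \psi(G) = \sum_{x \in G} \ord(x) \;\le\; \sum_{x \in G} n \;=\; n \cdot n \;=\; n^2, $$
which is precisely the assertion. This argument is self-contained and does not rely on Proposition \ref{Prop:2.1}, so I would present it as the main proof.

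For completeness I would note an alternative route via Proposition \ref{Prop:2.1}: since $\psi(G) \le \psi(C_n)$, it suffices to verify $\psi(C_n) \le n^2$. Grouping the elements of $C_n$ by their order and using that $C_n$ contains exactly $\varphi(d)$ elements of order $d$ for each divisor $d \mid n$, one obtains $\psi(C_n) = \sum_{d \mid n} d\,\varphi(d) \le n \sum_{d \mid n} \varphi(d) = n \cdot n = n^2$, the last equality being the classical identity $\sum_{d \mid n} \varphi(d) = n$. Either way the bound follows at once.

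There is essentially no obstacle here. The only mild subtlety is that the inequality is in fact strict whenever $n > 1$: the identity element contributes $1$ rather than $n$ to the sum, so $\psi(G) \le 1 + (n-1)n < n^2$. Since the statement only claims $\psi(G) \le n^2$, this sharper form is not needed, and I would leave it as a remark (or omit it) so as not to clutter the later applications, where the clean bound $\psi(G) \le n^2$ is what gets used.
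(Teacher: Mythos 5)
Your proposal is correct, and your main argument is genuinely more elementary than the paper's. The paper derives the bound from Proposition \ref{Prop:2.1} (the nontrivial fact that $\psi(G) \leq \psi(C_n)$, due to Amiri--Jafarian Amiri--Isaacs), combined with the explicit formula $\psi(C_n) = \sum_{d \mid n} d\varphi(d)$ and Gauss's identity $\sum_{d \mid n} \varphi(d) = n$ --- essentially your ``alternative route,'' which you state accurately. Your primary proof instead needs only Lagrange's theorem: $\ord(x) \leq n$ for each of the $n$ elements, so $\psi(G) \leq n^2$ immediately. What the paper's route buys is consistency with machinery it uses repeatedly later (the divisor-sum expression for $\psi(C_n)$ is central to Section 3); what your route buys is complete independence from Proposition \ref{Prop:2.1}, which is a deep result that the elementary bound simply does not require. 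Your closing observation that the inequality is strict for $n > 1$ (the identity contributes $1$, not $n$) is correct and harmless to include as a remark.
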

\begin{proof}
As it is well known, Gauss proved that:
$$ \sum_{d | n} \phi(d) =n $$
Consequently, by Proposition \ref{Eq:2.1} and the fact that
$$ \psi(C_n) = \sum_{d | n} d\phi(d)  $$
we have that:
$$ \psi(G) \leq n^2 $$
and the result follows.
\end{proof}
We will also recall that:
\begin{proposition}\label{Prop:2.3}
$\psi$ is a multiplicative function.
\end{proposition}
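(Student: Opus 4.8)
The plan is to prove the statement in the form that is actually needed in the sequel, namely: if $G_1$ and $G_2$ are finite groups of coprime orders, then $\psi(G_1 \times G_2) = \psi(G_1)\,\psi(G_2)$. Specialising this to cyclic groups and using the isomorphism $C_{mn} \cong C_m \times C_n$ for $\gcd(m,n)=1$ then yields that the arithmetic function $n \mapsto \psi(C_n)$ is multiplicative in the classical sense, which is what Proposition \ref{Prop:2.3} asserts.

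The key (and essentially only) computation is the following. Fix $x_1 \in G_1$ and $x_2 \in G_2$, and recall that the order of $(x_1,x_2)$ in $G_1 \times G_2$ equals the least common multiple of $\ord(x_1)$ and $\ord(x_2)$. Since $\ord(x_1) \mid |G_1|$, $\ord(x_2) \mid |G_2|$ and $\gcd(|G_1|,|G_2|)=1$, these two orders are themselves coprime, so this least common multiple is simply $\ord(x_1)\ord(x_2)$. Summing over all elements of $G_1 \times G_2$ and factoring the resulting double sum gives
$$ \psi(G_1 \times G_2) = \sum_{x_1 \in G_1}\sum_{x_2 \in G_2} \ord(x_1)\ord(x_2) = \Big(\sum_{x_1 \in G_1}\ord(x_1)\Big)\Big(\sum_{x_2 \in G_2}\ord(x_2)\Big) = \psi(G_1)\psi(G_2). $$

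Finally, for $\gcd(m,n)=1$ this yields $\psi(C_{mn}) = \psi(C_m \times C_n) = \psi(C_m)\psi(C_n)$, so $\psi$ is multiplicative as an arithmetic function; alternatively, one may argue directly from the closed form $\psi(C_n) = \sum_{d \mid n} d\phi(d)$ recorded in the proof of Proposition \ref{Prop:2.2}, noting that $d \mapsto d\phi(d)$ is multiplicative (being the pointwise product of the completely multiplicative function $d \mapsto d$ with the multiplicative function $\phi$) and that the divisor sum of a multiplicative function is again multiplicative. There is no real obstacle here; the only points to be careful about are the logical shape of the claim — multiplicativity does not concern arbitrary groups whose orders multiply to $mn$, but follows from the direct-product identity together with the Chinese-remainder decomposition of cyclic groups — and the small but essential observation that it is precisely the coprimality of the element orders that collapses their least common multiple into an ordinary product.
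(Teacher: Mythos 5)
Your proof is correct and complete. The paper itself offers no proof of this proposition --- it is merely recalled from the literature (the sense intended, as its use in the proof of Lemma \ref{Lm:2.1} confirms, is exactly the one you prove: $\psi(G_1\times G_2)=\psi(G_1)\psi(G_2)$ for groups of coprime order). Your argument is the standard one, and you correctly isolate the two points that make it work: that $\ord(x_1,x_2)=\mathrm{lcm}(\ord(x_1),\ord(x_2))$, and that coprimality of $|G_1|$ and $|G_2|$ forces coprimality of the element orders via Lagrange, collapsing the lcm into a product so the double sum factors. The deduction of multiplicativity of $n\mapsto\psi(C_n)$ via the Chinese remainder decomposition (or via the closed form $\sum_{d\mid n}d\phi(d)$) is likewise sound.
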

Now, our new idea to approach Lehmer's problem relies on some results which relate $\psi(C_n)$ with $\psi(G)$ for some other groups of the same order. Among these ones, we collect in one theorem the following:
\begin{theorem}\label{Thm:2.1}
Let $G$ be a noncyclic group of order $n$. Then:\\
(i) $\psi(G) \leq \frac{7}{11} \psi(C_n)$, and equality holds if and only if $n=4m$, $m$ odd, and $G=C_2 \times C_2 \times C_m$;\\
(ii) If $q$ is the smallest prime factor of $n$, the following holds, and equality holds if and only if $n=q^2 r$, $\gcd(r,q!)=1$ and $G=C_q \times C_q \times C_r$:
$$ \psi(G) \leq \frac{((q^2 -1)q +1)(q+1)}{q^5 +1} \psi(C_n)$$
(iii) If $n=2m$, $m$ odd:
$$ \psi(G) \leq \frac{13}{21} \psi(C_n) $$
(iv) If $n=8m$, $m$ odd:
$$ \psi(G) \leq \frac{27}{43} \psi(C_n) $$
and equality holds if and only if $G=Q_8 \times C_m$, where $Q_8$ is the quaternion group;\\
(v) If $n= 2^{\alpha} m$, $m$ odd and $\alpha \geq 4$:
$$ \psi(G) \leq \frac{2^{2 \alpha +3}+7}{7(1+2^{2 \alpha +1})} $$
(vi) If $n=2m$ with $m=p_1^{\alpha_1} \cdot \cdot \cdot p_j^{\alpha_j}$ odd, and $l:= \min \lbrace p_i^{\alpha_i} \rbrace$, then:
$$ \psi(G) \leq \left( \frac{1}{3} + \frac{2l}{\psi(C_l)} \right) \psi(C_n) $$
and equality holds if and only if $G=D_{2l} \times C_{m/l}$.
\end{theorem}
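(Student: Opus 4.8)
Since Theorem~\ref{Thm:2.1} merely collects bounds that already appear, in one form or another, in the recent literature on the function $\psi$, at the level of exposition its proof is a matter of quoting the appropriate references; nonetheless all six items rest on a single mechanism, and the plan is to reconstruct them from it. The backbone is the identity $\psi(C_n)=\sum_{d\mid n}d\phi(d)$ together with the multiplicativity of $\psi$ (Proposition~\ref{Prop:2.3}) and of $n\mapsto\psi(C_n)$: whenever $G=A\times B$ with $\gcd(|A|,|B|)=1$ one has $\psi(G)/\psi(C_{|G|})=\bigl(\psi(A)/\psi(C_{|A|})\bigr)\bigl(\psi(B)/\psi(C_{|B|})\bigr)$, so every ratio occurring in the theorem factors through the Sylow subgroups of $G$. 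The second ingredient is a reduction to a local problem: a group of order $2m$ with $m$ odd has a normal subgroup of order $m$ (an involution acts on $G$ by left translation as a product of $m$ transpositions, hence as an odd permutation), and, more generally, results on the structure of a group whose order carries a small prime or prime-power divisor force the extremal value of $\psi/\psi(C_n)$ to be dictated by a single small non-cyclic Sylow factor. Each part of the theorem is thereby reduced to a finite comparison among $p$-groups, multiplied back by a cyclic coprime part.

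The first concrete step is the verification of the equality cases, which simultaneously fixes the constants and exhibits the extremal families. These are direct enumerations: $\psi(C_2\times C_2)=1+3\cdot2=7$ against $\psi(C_4)=1+2+8=11$ gives the $7/11$ of (i); $\psi(C_q\times C_q)=1+(q^2-1)q$ against $\psi(C_{q^2})=(q^5+1)/(q+1)$ gives (ii); $\psi(S_3)=1+3\cdot2+2\cdot3=13$ against $\psi(C_6)=21$ gives (iii); $\psi(Q_8)=1+2+6\cdot4=27$ against $\psi(C_8)=43$ gives (iv); $\psi(D_{2l})=\psi(C_l)+2l$ against $\psi(C_{2l})=3\psi(C_l)$ gives the dihedral constant of (vi); and (v) is the corresponding (not necessarily sharp) estimate coming from the non-cyclic $2$-groups of order $2^{\alpha}$. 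Multiplicativity then carries each example to its product with a cyclic group of coprime odd order without changing the ratio, producing the families $C_2\times C_2\times C_m$, $C_q\times C_q\times C_r$, $Q_8\times C_m$ and $D_{2l}\times C_{m/l}$ named in the statement.

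The substantial step is the upper bound. For (i) I would follow the route in the literature: induct on $|G|$, and, using standard estimates relating $\psi(G)$ to the values of $\psi$ on a well-chosen normal subgroup and on the corresponding quotient, reduce to two situations. If $G$ is abelian and non-cyclic, multiplicativity collapses $\psi(G)/\psi(C_n)$ to a product over primes $p$ of factors $\psi(G_p)/\psi(C_{|G_p|})\le1$, strictly less than $1$ for at least one $p$, and an elementary estimate on non-cyclic abelian $p$-groups shows that no such factor exceeds $7/11$ (the maximum being $\psi(C_2\times C_2)/\psi(C_4)$). If $G$ is non-abelian it contains comparatively many elements whose order is a proper divisor of $\exp(G)$, which forces an even larger loss; the family $C_2\times C_2\times C_m$ wins because it surrenders the factor only at the prime $2$, and minimally there. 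Items (ii)--(vi) are sharper variants of the same argument: once the worst case is localised, the refined constants come from the short classification of $p$-groups of order $p^2$ and of order $8$ (with $C_p\times C_p$ and $Q_8$ the respective $\psi$-maximisers), and, for $n=2m$, from the normal subgroup of order $m$, which forces $G$ --- modulo its cyclic odd part --- to be either cyclic (excluded) or a metacyclic group in which the involution inverts a cyclic factor, and whose $\psi$ is largest when that factor is the smallest prime-power component, giving the $D_{2l}$ shape.

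The main obstacle, as always with exact extremal statements about $\psi$, is not any individual inequality but the bookkeeping required to pin down the equality cases \emph{precisely}: one must rule out every near-extremal competitor (for instance, inside order $2m$, comparing $D_{2l}\times C_{m/l}$ against the other semidirect products of $C_m$ by $C_2$ and against the $C_2\times C_2$-type configurations that become available for different Sylow structures). I would isolate this as a short sequence of numerical lemmas comparing $\psi(H)/\psi(C_{|H|})$ for the finitely many relevant small groups $H$, and then let multiplicativity together with the normal-subgroup reductions propagate these comparisons to arbitrary $n$.
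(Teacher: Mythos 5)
The paper itself offers no proof of Theorem~\ref{Thm:2.1}: it is an explicit digest of results quoted from the literature ([6]--[12], [20]), so there is no internal argument to measure you against, and your opening observation that citation would suffice at the level of the paper is accurate. Your verification of the extremal configurations is also correct and valuable: the computations $\psi(C_2\times C_2)/\psi(C_4)=7/11$, $\psi(C_q\times C_q)/\psi(C_{q^2})=((q^2-1)q+1)(q+1)/(q^5+1)$, $\psi(S_3)/\psi(C_6)=13/21$, $\psi(Q_8)/\psi(C_8)=27/43$ all check out, and your dihedral computation in fact exposes a misprint in the statement: since $\psi(D_{2l})=\psi(C_l)+2l$ and $\psi(C_{2l})=3\psi(C_l)$, the constant in (vi) must be $\tfrac13+\tfrac{2l}{3\psi(C_l)}$ (which correctly specialises to $13/21$ at $l=3$), not $\tfrac13+\tfrac{2l}{\psi(C_l)}$, which already exceeds $1$ at $l=3$ and would contradict Proposition~\ref{Prop:2.1}. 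Similarly (v) as printed is missing the factor $\psi(C_n)$ on the right-hand side; you implicitly read it, correctly, as a ratio bound.

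Where your reconstruction falls short of a proof is the upper bounds for groups that are not nilpotent. The assertion that ``every ratio occurring in the theorem factors through the Sylow subgroups of $G$'' is only true when $G$ is a direct product of its Sylow subgroups; for a general noncyclic $G$ no such factorisation of $\psi(G)/\psi(C_n)$ exists, and the entire difficulty of the $7/11$ theorem and of the order-$2m$ results lies precisely there. The published arguments run an induction on $|G|$ built on a specific structural input --- a normal (often cyclic) Sylow subgroup for the \emph{largest} prime dividing $n$, obtained from transfer/Burnside-type arguments, together with estimates for $\psi$ of Frobenius groups and of extensions $N\rtimes H$ in terms of $\psi(N)$ and $\psi(G/N)$ --- and a nontrivial case analysis to eliminate near-extremal non-abelian competitors. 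Your sentence ``if $G$ is non-abelian it contains comparatively many elements whose order is a proper divisor of $\exp(G)$, which forces an even larger loss'' is the step that would need to become a theorem, and it is not obviously true as stated (e.g.\ $Q_8$ achieves $27/43$, barely below $7/11$, despite being non-abelian). So the proposal is a correct account of where the constants come from and of the reduction via multiplicativity and the index-$2$ subgroup for $n=2m$, but it does not by itself close the non-nilpotent case, which is the substance of the cited theorems.
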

We also state here a useful result on $\psi''$, which gives us the possibility to determine the nature of the group we are dealing with:
\begin{theorem}\label{Thm:2.2}
Let $G$ be a group. Then:\\
(i) If $\psi''(G) > \frac{7}{16}$, then $G$ is cyclic;\\
(ii) If $\psi''(G) > \frac{27}{64}$, then $G$ is abelian;\\
(iii) If $\psi''(G) > \frac{13}{36}$, then $G$ is nilpotent;\\
(iv) If $\psi''(G) > \frac{31}{144}$, then $G$ is supersolvable;\\  
(v) If $\psi''(G) > \frac{211}{3600}$, then $G$ is solvable.
\end{theorem}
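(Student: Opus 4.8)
The plan is to read each of the five implications in Theorem~\ref{Thm:2.2} as a sharp upper bound in disguise: part~(i) is the assertion that $\psi''(G)\le 7/16$ for every non-cyclic $G$, part~(ii) that $\psi''(G)\le 27/64$ for every non-abelian $G$, and $13/36$, $31/144$, $211/3600$ are, respectively, upper bounds for $\psi''$ on the non-nilpotent, non-supersolvable and non-solvable groups. The first step is to verify that these constants are precisely $\psi''$ of the smallest group failing the property concerned: counting elements by order gives $\psi(C_2\times C_2)=7$, $\psi(Q_8)=27$, $\psi(S_3)=13$, $\psi(A_4)=31$, $\psi(A_5)=211$, hence $\psi''(C_2\times C_2)=7/16$, $\psi''(Q_8)=27/64$, $\psi''(S_3)=13/36$, $\psi''(A_4)=31/144$, $\psi''(A_5)=211/3600$. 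This fixes the targets and also identifies the extremal families $C_2^2\times C_m$, $Q_8\times C_m$, $S_3\times C_m,\dots$ with $m$ coprime to the relevant order, for which multiplicativity (Proposition~\ref{Prop:2.3}) gives $\psi''=\tfrac{7}{16}\psi''(C_m)$, $\tfrac{27}{64}\psi''(C_m)$, \dots, and therefore at most the stated constant since $\psi''(C_m)\le 1$ by Proposition~\ref{Prop:2.2} (this shows the constants cannot be lowered, though only the inequalities themselves are needed for Theorem~\ref{Thm:2.2}).

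For (i) and (ii) I would argue directly from Theorem~\ref{Thm:2.1}, multiplicativity, and $\psi(C_n)\le n^2$, splitting on the $2$-adic valuation of $n=|G|$. Writing $n=\prod_i p_i^{a_i}$ and $\psi''(C_n)=\prod_i\psi''(C_{p_i^{a_i}})$, and using $\psi''(C_{p^a})\le 1$ together with $\psi''(C_{2^a})\le 11/16$ for $a\ge 2$ and $\psi''(C_{2^a})\le 43/64$ for $a\ge 3$: if $4\mid n$ then Theorem~\ref{Thm:2.1}(i) gives $\psi''(G)\le\tfrac{7}{11}\psi''(C_n)\le\tfrac{7}{11}\cdot\tfrac{11}{16}=\tfrac{7}{16}$, and if $8\mid n$ and $G$ is non-abelian then Theorem~\ref{Thm:2.1}(iv) gives $\psi''(G)\le\tfrac{27}{43}\psi''(C_n)\le\tfrac{27}{43}\cdot\tfrac{43}{64}=\tfrac{27}{64}$. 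When $4\nmid n$ the clean ratio $7/11$ is \emph{not} sharp enough for the $\psi''$ statement, so there I would instead invoke the prime-specific estimates (ii), (iii), (vi) of Theorem~\ref{Thm:2.1}: for $n$ odd with least prime $q\ge 3$ the ratio $\tfrac{((q^2-1)q+1)(q+1)}{q^5+1}$ is already below $27/64$ (hence below $7/16$), so multiplying by $\psi''(C_n)\le 1$ settles both statements; for $n=2m$ with $m$ odd one uses the $D_{2l}\times C_{m/l}$-type bound (vi), after which only finitely many small orders remain (those where $\psi''(C_n)$ is close to $1$), and for each of them the non-cyclic (resp.\ non-abelian) groups are of dihedral or dicyclic type with $\psi''$ well below the target, so a direct check finishes. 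The one genuinely incomplete point on this route is the residue $n\equiv 4\pmod 8$ for part~(ii), where Theorem~\ref{Thm:2.1}(i) only yields $7/16$ and one needs the known description of the non-cyclic groups whose $\psi''$ sits just below $7/16$ (the non-abelian representative being $Q_8\times C_m$) to push past to $27/64$.

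For (iii), (iv) and (v) I would run a minimal-counterexample induction on $|G|$: let $G$ be of least order lacking the property $\mathcal P\in\{$nilpotent, supersolvable, solvable$\}$ with $\psi''(G)>c_{\mathcal P}$, pick a minimal normal subgroup $N$, and use a reduction inequality comparing $\psi(G)$ with $\psi(G/N)$ — handled by Proposition~\ref{Prop:2.3} when $N$ splits off as a coprime direct factor, and by a more careful estimate otherwise — so that minimality forces $G/N$ to satisfy $\mathcal P$ and $N$ becomes the unique minimal normal subgroup. For nilpotency the obstruction is a non-normal Sylow subgroup; stripping off a normal Hall complement by multiplicativity and re-applying the extremal cases of Theorem~\ref{Thm:2.1} drives $G$ down to a small Frobenius group, among which $S_3$ maximises $\psi''$ at $13/36$. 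For supersolvability the obstruction is a non-cyclic chief factor, i.e.\ an elementary abelian $p$-group of rank $\ge 2$ acted on irreducibly; the minimal such configuration is $A_4$ acting on $C_2\times C_2$, giving $31/144$. For solvability $G$ has a non-abelian composition factor, and invoking the classification of minimal simple groups (equivalently, that $60$ is the least non-solvable order) plus a bound on $\psi''$ of an almost-simple section built from such a factor pins the maximum at $\psi''(A_5)=211/3600$.

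The main obstacle I expect is in (iv) and especially (v). For (i)--(iii) Theorem~\ref{Thm:2.1} essentially supplies the extremal families, but for non-supersolvable and non-solvable groups there is no such ready-made input, so one must (a) import the list of minimal simple groups to single out $A_5$ and to control $\psi''$ both of simple composition factors and of the extensions assembled from them, and (b) make the step from $G$ to $G/N$ quantitatively tight — this is delicate because $\psi$ is only \emph{roughly} multiplicative across non-split extensions, whereas the constants $31/144$ and $211/3600$ are small and leave almost no slack to lose through the reduction. A second, more routine obstacle, already present in (i) and (ii), is that the headline ratio $7/11$ of Theorem~\ref{Thm:2.1}(i) is genuinely not sharp for the $\psi''$-statements once $4\nmid n$, so the argument cannot avoid the prime-dependent refinements (ii), (iii), (vi) and an explicit check of the borderline small orders.
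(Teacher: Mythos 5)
First, a point of calibration: the paper does not prove Theorem~\ref{Thm:2.2} at all --- it is imported from the literature (the surrounding text points to [7--12]; the statement is essentially T\u{a}rn\u{a}uceanu's criteria from [7] together with the solvability bound). So there is no in-paper argument to compare you against; what can be judged is whether your outline would actually deliver the theorem. Your identification of the extremal groups is correct ($\psi(C_2\times C_2)=7$, $\psi(Q_8)=27$, $\psi(S_3)=13$, $\psi(A_4)=31$, $\psi(A_5)=211$), and the pieces you actually compute for (i) are sound: when $4\mid n$ one has $\psi''(C_n)\le\psi''(C_{2^a})\le\tfrac{11}{16}$ and Theorem~\ref{Thm:2.1}(i) gives $\tfrac{7}{11}\cdot\tfrac{11}{16}=\tfrac{7}{16}$, and for odd $n$ the $q$-dependent ratio in (ii) is already $\tfrac{25}{61}<\tfrac{27}{64}$ at $q=3$. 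One caveat: as printed, Theorem~\ref{Thm:2.1}(vi) reads $\tfrac13+\tfrac{2l}{\psi(C_l)}$, which exceeds $1$ already for $l=3$; your $n=2m$ case needs the corrected coefficient $\tfrac13+\tfrac{2l}{3\psi(C_l)}$ (consistent with (iii)), after which $\psi''(G)\le\tfrac{\psi(C_l)}{4l^2}+\tfrac{1}{2l}\le\tfrac{13}{36}$ for all odd $l\ge 3$ and no residual finite check is needed there.

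The genuine gaps are where you flag them, and they are not small. For (ii) with $|G|\equiv 4\pmod 8$ the ratio $\tfrac{7}{11}$ is saturated by the \emph{abelian} group $C_2\times C_2\times C_m$, so nothing in Theorem~\ref{Thm:2.1} separates non-abelian groups from $\tfrac{7}{16}$; you must import the second-maximum results of [10]. For (iii)--(v) your ``reduction inequality comparing $\psi(G)$ with $\psi(G/N)$'' is the crux and is left as a placeholder: the lemma that actually powers the minimal-counterexample scheme is $\psi(G)\le |N|^2\,\psi(G/N)$ for $N\trianglelefteq G$, hence $\psi''(G)\le\psi''(G/N)$, which follows from $\ord(y)=\ord(yN)\cdot\ord(y^{\ord(yN)})\le\ord(yN)\exp(N)$; this (and its subgroup analogue, needed to pass to sections) should be stated and proved, not gestured at, since $\psi$ is not even approximately multiplicative across non-split extensions. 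Finally, (v) at the constant $\tfrac{211}{3600}$ is a substantial theorem in its own right: after reducing to a minimal simple section one still needs uniform upper bounds on $\psi''$ over the infinite families $\mathrm{PSL}(2,p)$, $\mathrm{PSL}(2,2^p)$, $\mathrm{PSL}(2,3^p)$, $\mathrm{Sz}(2^p)$ and $\mathrm{PSL}(3,3)$, which cannot be extracted from Theorem~\ref{Thm:2.1}. As it stands your proposal is a correct road map whose hardest legs are untraveled; the honest alternative --- and what the paper in effect does --- is to cite [7]--[12] for these statements.
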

However, this results is not the best possible, since we would be interested in the converse problem: for instance, if $G$ is abelian, what can we say about $\psi''(G)$? This is indeed an open problem (see [7]), and there has been some progress recently. We refer to [7-12] for the above results and more details on this topic. We now report here a fundamental result from [8], which will be crucial (in a slightly modified version) in our method:
\begin{theorem}\label{Thm:2.3}
A group $G$ is nilpotent if and only if
$$ \psi(G) \geq \prod_{p | n} p(p^{\alpha_p} -1) +1 $$
where $|G|=n= \prod_{p|n} p^{\alpha_p}$. Moreover, equality holds if and only if all the Sylow subgroups of $G$ have prime exponent.
\end{theorem}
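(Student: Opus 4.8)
The plan is to prove the two implications by quite different means: the ``only if'' direction, together with the equality case, is elementary and uses only multiplicativity of $\psi$, whereas the ``if'' direction carries the real content. Throughout, write $f(n):=\prod_{p\mid n}\bigl(p(p^{\alpha_p}-1)+1\bigr)$ for the bound appearing in the statement.

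First I would dispose of the forward implication. If $G$ is nilpotent it is the internal direct product of its Sylow subgroups $P_p$ ($p\mid n$, $|P_p|=p^{\alpha_p}$), so by Proposition~\ref{Prop:2.3} we have $\psi(G)=\prod_{p\mid n}\psi(P_p)$. For a $p$-group $P$ of order $p^{\alpha}$ every non-trivial element has order a positive power of $p$, hence
\[
\psi(P)=1+\sum_{1\neq x\in P}\ord(x)\ \geq\ 1+(p^{\alpha}-1)\,p\ =\ p(p^{\alpha}-1)+1 ,
\]
with equality exactly when every non-trivial element of $P$ has order $p$, i.e. when $\exp(P)=p$. Multiplying over $p\mid n$ gives $\psi(G)\geq f(n)$, with equality iff every Sylow subgroup of $G$ has prime exponent. (The reverse implication will show that $\psi(G)\geq f(n)$ already forces $G$ to be nilpotent, so this also settles the stated equality characterisation.)

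The substance is the reverse implication, which I would prove by contraposition: if $G$ is \emph{not} nilpotent, then $\psi(G)<f(n)$. The guiding principle is that failure of nilpotency produces, for some prime $p\mid n$, more than one Sylow $p$-subgroup and hence strictly more than $p^{\alpha_p}$ elements of $p$-power order; such elements have small (prime-power) order and are in effect ``traded against'' elements of large composite order, which is exactly what the sum $\psi$ rewards. To make this rigorous I would induct on $|G|$. Since a non-nilpotent group contains a minimal non-nilpotent (Schmidt) subgroup, the inductive step should reduce to the case that $G$ itself is minimal non-nilpotent, by means of auxiliary inequalities bounding $\psi(G)$ in terms of $\psi$ of the section carrying the non-nilpotency and controlling how $f$ behaves under passage to subgroups and quotients. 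For the base case I would invoke Schmidt's classification: $G=P\rtimes\langle y\rangle$ with $P$ the normal Sylow $p$-subgroup and $\langle y\rangle$ a cyclic Sylow $q$-subgroup having $y^{q}$ central; then all $p$-elements have order at most $p^{\alpha_p}$, all other elements lie in conjugates of cosets of $\langle y\rangle$ and have order dividing $q^{b}$ or $pq^{b}$, and a direct count of these contributions gives a value strictly below $f(p^{a}q^{b})$.

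The hard part will be exactly this inductive reduction. Unlike $\psi(C_n)$, the function $\psi$ is not monotone under taking subgroups, and $f(n)$ is sensitive to the precise exponents $\alpha_p$, so one cannot simply restrict to a non-nilpotent subgroup and conclude. What is required is a genuinely quantitative comparison — an inequality of the shape $\psi(G)\leq c\cdot\psi(H)$ for the relevant section $H$ in which nilpotency already fails, with $c$ small enough to beat $f(n)$ — after which one must check that in the equality case of the forward direction none of these comparisons can be tight, so that equality in the theorem is characterised precisely by ``all Sylow subgroups have prime exponent''. The Schmidt-group base computation itself is routine but should be organised uniformly over the two families (abelian and non-abelian normal $p$-subgroup) occurring in Schmidt's theorem.
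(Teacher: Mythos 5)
The paper itself does not prove this theorem: it is imported from [8] (Amiri--Jafarian Amiri), so there is no in-paper argument to compare yours against, and your proposal has to stand on its own. Your forward implication does: nilpotency gives the Sylow direct-product decomposition, Proposition~\ref{Prop:2.3} gives $\psi(G)=\prod_{p\mid n}\psi(P_p)$, and the elementwise bound $\ord(x)\ge p$ for $1\ne x\in P_p$ yields $\psi(P_p)\ge p(p^{\alpha_p}-1)+1$ with equality exactly when $\exp(P_p)=p$. (Your reading of the bound as $\prod_{p\mid n}\bigl(p(p^{\alpha_p}-1)+1\bigr)$, with the $+1$ inside the product, is the intended one; the statement as typeset is ambiguous, but only this reading is consistent with the equality case and with how Lemma~\ref{Lm:2.1} uses the theorem.)

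The reverse implication is where all the content lies, and what you have written for it is a research plan, not a proof. You correctly identify the obstruction --- $\psi$ is not monotone under passage to subgroups or quotients, and $f(n)$ is sensitive to the exact exponents $\alpha_p$ --- but you then posit, without stating or proving it, precisely the quantitative lemma (``an inequality of the shape $\psi(G)\le c\cdot\psi(H)$ with $c$ small enough to beat $f(n)$'') that would make the induction to Schmidt groups go through. No such inequality is exhibited; the inductive step is not even set up (is $H$ a subgroup, a quotient, or a section, and how does $f$ of its order relate to $f(n)$?); and the Schmidt-group base case is declared ``routine'' without the count being carried out, even though getting a \emph{strict} inequality uniformly over both families in Schmidt's classification is exactly the kind of place such arguments fail. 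The observation that non-nilpotency forces more than $p^{\alpha_p}$ elements of $p$-power order for some $p$ is a reasonable heuristic, but nothing in the proposal converts it into the bound $\psi(G)<f(n)$. As it stands the hard direction is missing; to repair it you would need to supply and prove the reduction lemma, or simply follow the argument of [8].
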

\begin{lemma}\label{Lm:2.1}
If $G$ is a noncyclic, nilpotent group of order $4n$, with $n$ squarefree and odd, then:
$$ \psi''(G) > \frac{\varphi(n)}{2n} $$
\end{lemma}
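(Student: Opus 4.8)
The plan is to pin down the isomorphism type of $G$ exactly, and then use multiplicativity of $\psi$ to turn the statement into an elementary inequality about $\psi(C_n)$. Since $G$ is nilpotent, it is the internal direct product of its Sylow subgroups. As $|G|=4n$ with $n$ odd and squarefree, for each prime $p\mid n$ the Sylow $p$-subgroup has order exactly $p$ and is therefore cyclic, while the Sylow $2$-subgroup $P$ has order $4$, so $P\cong C_4$ or $P\cong C_2\times C_2$. If $P\cong C_4$, then $G\cong C_4\times\prod_{p\mid n}C_p\cong C_{4n}$ by coprimality of the orders of the factors, contradicting that $G$ is noncyclic. Hence $P\cong C_2\times C_2$ and
$$ G\;\cong\;C_2\times C_2\times\prod_{p\mid n}C_p\;\cong\;C_2\times C_2\times C_n . $$

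\textbf{Reduction to a numerical inequality.} Because $\psi$ is multiplicative (Proposition~\ref{Prop:2.3}), $\gcd(4,n)=1$, and $C_2\times C_2$ has one element of order $1$ and three of order $2$ (so $\psi(C_2\times C_2)=7$), we get $\psi(G)=7\,\psi(C_n)$. Hence
$$ \psi''(G)=\frac{\psi(G)}{(4n)^2}=\frac{7\,\psi(C_n)}{16\,n^2}, $$
and the desired bound $\psi''(G)>\dfrac{\varphi(n)}{2n}$ is equivalent, after clearing denominators, to $7\,\psi(C_n)>8\,n\,\varphi(n)$.

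\textbf{The inequality.} Using $\psi(C_n)=\sum_{d\mid n}d\varphi(d)$ (the formula appearing in the proof of Proposition~\ref{Prop:2.2}) together with multiplicativity and squarefreeness of $n$, both sides factor over $p\mid n$: $\psi(C_n)=\prod_{p\mid n}\bigl(1+p(p-1)\bigr)=\prod_{p\mid n}(p^2-p+1)$ and $n\varphi(n)=\prod_{p\mid n}p(p-1)$, so the claim becomes
$$ \prod_{p\mid n}\frac{p^2-p+1}{p(p-1)}\;>\;\frac{8}{7}. $$
Each factor exceeds $1$; in particular the factor for $p=3$ equals $\frac{7}{6}>\frac{8}{7}$, so the inequality is immediate as soon as $3\mid n$ (equivalently: write $n=3m$, then $7\psi(C_n)=49\,\psi(C_m)\ge 49\,m\varphi(m)>48\,m\varphi(m)=8n\varphi(n)$), and in the remaining cases one bounds the product below by retaining the contributions of the smallest prime divisors of $n$. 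I expect this last point to be the only real obstacle: a single large prime contributes a factor barely above $1$, so one must check that, under the hypotheses actually in force, enough is accumulated from the small prime divisors of $n$ — isolating the $p=3$ term being the clean way to do so.
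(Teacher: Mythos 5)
Your reduction is correct and, in fact, sharper than the paper's own argument: since $n$ is odd and squarefree, every odd Sylow subgroup of the nilpotent group $G$ is cyclic of prime order, so noncyclicity forces the Sylow $2$-subgroup to be $C_2\times C_2$ and $G\cong C_2\times C_2\times C_n$; multiplicativity then gives $\psi(G)=7\psi(C_n)$ exactly, and the claim is equivalent to $7\psi(C_n)>8n\varphi(n)$, i.e.\ to $\prod_{p\mid n}\bigl(1+\tfrac{1}{p(p-1)}\bigr)>\tfrac{8}{7}$. (The paper instead bounds the two Sylow factors separately, quoting $\psi''(V)\ge\tfrac12$ for the Sylow $2$-subgroup $V$ of order $4$; but $\psi''(C_2\times C_2)=\tfrac{7}{16}<\tfrac12$, so your exact computation already exposes a flaw in the paper's proof.)

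The step you flag as ``the only real obstacle'' is, however, not fillable: when $3\nmid n$ the target inequality is false, no matter how many small prime divisors you accumulate. Indeed
$$\prod_{p\ge 5}\Bigl(1+\frac{1}{p(p-1)}\Bigr)=\frac{\zeta(2)\zeta(3)}{\zeta(6)}\cdot\frac{2}{3}\cdot\frac{6}{7}\approx 1.1106<\frac{8}{7}\approx 1.1429,$$
so the product over the prime divisors of any squarefree odd $n$ with $3\nmid n$ can never reach $\tfrac{8}{7}$. Concretely, $n=5$ gives $G=C_2\times C_2\times C_5$ with $\psi(G)=7\cdot 21=147$ and $\psi''(G)=\tfrac{147}{400}=0.3675<\tfrac{2}{5}=\tfrac{\varphi(5)}{2\cdot 5}$. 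Thus the lemma as stated is simply false whenever $3\nmid n$, and your argument proves it precisely when $3\mid n$ (where the single factor $\tfrac{7}{6}$ already exceeds $\tfrac{8}{7}$). This is a defect of the statement rather than of your strategy --- but it matters, because the paper goes on to apply the lemma to Carmichael numbers that are explicitly assumed not divisible by $3$.
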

\begin{proof}
The proof is essentially the same as the one of Theorem \ref{Thm:2.3}. We can also prove it as follows. By using Theorem 2.2 in [20] and Theorem \ref{Thm:2.3} above, we can easily notice that:
$$ \psi(G) = \psi(V) \psi(U) $$
where $V$ is the $2$-Sylow subgroup of $G$ of order $4$ and $U$ is the direct product of all the other Sylow subgroups of $G$ (recall: since $G$ is nilpotent, it is equal to the direct product of all its Sylow subgroups). Then, since $\psi$ is multiplicative, the above equality holds. By Theorem \ref{Thm:2.3}, we derive:
$$ \psi''(U) > \frac{\varphi(n)}{n} $$
while, by Theorem 2.2 in [20], we get:
$$ \psi''(V) \geq \frac{2^2 \cdot 2}{16}= \frac{1}{2} $$
Thus, the conclusion follows.
\end{proof}
We are now almost ready to introduce our new method. The last result that we need before proceeding involves the existence of nilpotent groups of a certain order. We recall that a number $n$ is called cyclic if and only if $\gcd(n, \varphi(n))=1$, that is, if and only if every group of order $n$ is cyclic. The next Proposition gives us a simple criterion to determine when, for some number $n$, there exists at least one noncylic nilpotent group of order $n$ (this will be useful in view of the previous result: since it is not yet known a complete converse to Theorem \ref{Thm:2.2}, we will make use of nilpotent groups, for which instead the conclusion holds by Theorem \ref{Thm:2.3}):
\begin{proposition}\label{Prop:2.4}
If $n$ is odd, then there exists a nonclyclic nilpotent groups of order $4n$.
\end{proposition}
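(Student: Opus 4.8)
The plan is to produce such a group explicitly, the natural candidate being $G := C_2 \times C_2 \times C_n$. Since $n$ is odd we have $\gcd(4,n)=1$, so $|G| = 4n$ exactly, the Sylow $2$-subgroup of $G$ is the Klein four-group $C_2 \times C_2$, and the complementary Hall subgroup on the odd part is $C_n$. Thus $G$ has the right order, and it remains only to check that it is noncyclic and nilpotent.

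Nilpotency is immediate: $G$ is a direct product of abelian groups, hence abelian, hence nilpotent, so no appeal to the $\psi$-criteria of Section 2 is required here (Theorem \ref{Thm:2.3} would of course also apply, but that is unnecessary overkill). For non-cyclicity, I would observe that $G$ contains the subgroup $C_2 \times C_2 \times \{1\}$, which is isomorphic to the Klein four-group and therefore not cyclic; since every subgroup of a cyclic group is cyclic, $G$ cannot be cyclic. Equivalently, a finite abelian group is cyclic if and only if all of its Sylow subgroups are cyclic, and the Sylow $2$-subgroup $C_2 \times C_2$ of $G$ fails this.

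I do not anticipate a genuine obstacle. The one point worth recording is why this particular construction works for \emph{every} odd $n$: a nilpotent group of order $4n$ with $n$ odd is forced to be the direct product of its Sylow $2$-subgroup (of order $4$, hence $C_4$ or $C_2 \times C_2$) with a group of order $n$, and a group of order $n$ need not itself be noncyclic — indeed it is cyclic whenever $\gcd(n,\varphi(n))=1$ — so taking the Sylow $2$-part to be $C_2 \times C_2$ is the choice that guarantees a noncyclic example regardless of $n$. With that choice the verification above goes through unconditionally, which is exactly the assertion of the Proposition.
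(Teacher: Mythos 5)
Your proposal is correct and follows essentially the same route as the paper: both construct the group explicitly as a direct product whose Sylow $2$-subgroup is the Klein four-group, and conclude non-cyclicity from the fact that a nilpotent (here abelian) group is cyclic only if all its Sylow subgroups are. The only cosmetic difference is that you fix the odd part to be $C_n$ and invoke abelianness for nilpotency, whereas the paper allows arbitrary $p_i$-groups for the odd primes and cites the general facts that $p$-groups are nilpotent and that direct products of nilpotent groups are nilpotent.
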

\begin{proof}
We will rely on some basic results in group theory, see for instance [13]. We recall that:\\
(i) the direct product of nilpotent groups is nilpotent;\\
(ii) every finite $p$-group is nilpotent;\\
(iii) a nilpotent groups $G$ is cyclic iff all its $p$-Sylow subgroups are cyclic;\\
(iv) a finite group $V$ is a $p$-group iff $|V|=p^a$ for some positive integer $a$.\\
Let $4n=4 \cdot p_1 ^{\alpha_1} \cdot \cdot \cdot p_j^{\alpha_j}$ be the prime factorisation of $4n$. Then, by (iv), if we take any group $V_i$ of order $p_i ^{\alpha_i}$ for each $i$, every such group is a $p_i$-group (for the considered $i$, of course). Then, by (ii) and (i), we conclude that their direct product is nilpotent.\\
Now let $V$ be a nonclycic group of order $4$ (since $4$ is not a cyclic number, because $\gcd(\varphi(4),4)=2 \neq 1$, such a group certainly exists). The direct product of all the $V_i$'s and $V$ is a nilpotent group (again by (iv), (ii) and (i)), and hence, since $V$ is not cyclic, by (iii), we can conclude that $G$, defined as the direct product of these groups, is noncyclic and nilpotent. The proof is concluded.
\end{proof}
Via this proposition, we can say that if $n$ satisfies \eqref{Eq:1.2}, then there exists a nonclyclic nilpotent group of order $4n$. This will give us the possibility to apply the previous results and get new properties of Lehmer's numbers.
\section{Main results}
We can finally begin to develop our new method to study Lehmer's totient problem. Consider a noncyclic nilpotent group of order $4n$, with $n$ satisfying \eqref{Eq:1.2}. By Proposition \ref{Prop:2.4}, such a group certainly exists. By applying (i) of Theorem \ref{Thm:2.1}, we obtain:
$$ \psi(G) \leq \frac{7}{11} \psi(C_{4n}) $$
Since it can be easily seen that
$$ \psi(C_m)= \prod_{p | m} \frac{p^{2 \alpha_p +1} +1}{p+1} $$
(see the references for such result), by assumption we obtain:
$$ \psi(G) \leq \frac{7}{11} \psi(C_{4n}) = 7 \psi(G) = 7 \left( \frac{n(n-1)}{k} + \sum_{d|n, d \neq n} d \varphi(d) \right) $$
Since $n$ is odd, $d \leq \frac{n}{3}$, because $d \neq n$, so:
$$ \psi(G) \leq 7 \left( \frac{n(n-1)}{k} + \frac{n}{3} \sum_{d|n, d \neq n}  \varphi(d) \right) = 7 \left( \frac{n(n-1)}{k} + \frac{n}{3} \left( n - \frac{(n-1)}{k} \right) \right) $$ 
It is then clear that:
$$ \psi(G) \leq 7 \left( \frac{2n^2}{3k} + \frac{n^2}{3} \right) $$
Suppose that there exists a noncyclic nilpotent group $G$ of order $4n$ such that:
$$ \psi''(G) > \frac{7}{24} $$
Then, by what we said above, we get:
$$ \frac{7}{24} < \psi''(G) \leq \frac{7}{16} \left( \frac{2}{3k} + \frac{1}{3} \right) $$
Solving this inequality with respect to $k$, we obtain:
$$ k < 2 $$
But then, $n$ does not satisfy \eqref{Eq:1.2}, a contradiction. We thus conclude that, if a Carmichael number $n$ is such that there exists a noncyclic nilpotent group of order $4n$ with $\psi''$ bigger than $\frac{7}{24}$, then $n$ is not a counterexample to Lehmer's totient problem.\\
The main idea of our method is essentially this one. Now we need to prove the existence of some group as above, and this is where the results on nilpotent groups will come into play. But before doing this, we notice that the above result can be generalised in a first way as follows: we only need to use Theorem \ref{Thm:2.1} with groups of order $2^{\alpha} n$ to prove
\begin{theorem}\label{Thm:3.1}
Let $n$ be a Carmichael number. If any of the following group exists, then $n$ does not satisfy \eqref{Eq:1.2}.\\
(i) There exists a noncyclic group $G$ of order $2n$ such that $\psi''(G) > \frac{13}{42}$;\\
(ii) There exists a noncyclic group $G$ of order $4n$ such that $\psi''(G) > \frac{7}{24}$;\\
(iii) There exists a noncyclic group $G$ of order $8n$ such that $\psi''(G) > \frac{9}{32}$;\\
(iv) There exists a noncyclic group of order $2^{\alpha} n$, $\alpha \geq 4$, such that
$$ \psi''(G) > \frac{16}{63} + \frac{1}{9 \cdot 2^{2 \alpha -1}} $$
\end{theorem}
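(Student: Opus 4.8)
The plan is to run, case by case, the very same computation the excerpt carries out for groups of order $4n$, simply replacing the power $4$ by $2^{\alpha}$ and carrying the constants along. Throughout, $n$ is a Carmichael number (so in particular $n$ is odd), and we argue by contradiction: suppose in addition that $k\varphi(n)=n-1$ with $k\geq 2$, so that $\varphi(n)=(n-1)/k$.

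The first step is to record the inequality common to all four cases. Since $n$ is odd, every proper divisor $d$ of $n$ satisfies $n/d\geq 3$, hence $d\leq n/3$; combining this with Gauss's identity $\sum_{d\mid n}\varphi(d)=n$ gives
$$ \psi(C_n)=\sum_{d\mid n}d\varphi(d)=\frac{n(n-1)}{k}+\sum_{\substack{d\mid n\\ d\neq n}}d\varphi(d)\leq\frac{n(n-1)}{k}+\frac{n}{3}\Bigl(n-\frac{n-1}{k}\Bigr)<\frac{2n^{2}}{3k}+\frac{n^{2}}{3}. $$

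Next, given a noncyclic group $G$ of order $2^{\alpha}n$, I would apply the appropriate clause of Theorem \ref{Thm:2.1}: clause (iii) for $\alpha=1$, clause (i) for $\alpha=2$, clause (iv) for $\alpha=3$, and clause (v) for $\alpha\geq 4$ (each needing only that $G$ is noncyclic). Each yields a bound $\psi(G)\leq c_{\alpha}\,\psi(C_{2^{\alpha}n})$; since $n$ is odd, $\psi$ is multiplicative and $\psi(C_{2^{\alpha}})=(2^{2\alpha+1}+1)/3$, so $\psi(C_{2^{\alpha}n})=\tfrac{2^{2\alpha+1}+1}{3}\psi(C_n)$. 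A short computation shows that $e_{\alpha}:=c_{\alpha}\cdot\tfrac{2^{2\alpha+1}+1}{3}$ equals $\tfrac{13}{7},\ 7,\ 27$ for $\alpha=1,2,3$ and $\tfrac{1}{21}(2^{2\alpha+3}+7)$ for $\alpha\geq 4$. Dividing $\psi(G)\leq e_{\alpha}\psi(C_n)$ by $|G|^{2}=2^{2\alpha}n^{2}$ and inserting the estimate above for $\psi(C_n)$ gives
$$ \psi''(G)<\frac{e_{\alpha}}{2^{2\alpha}}\left(\frac{2}{3k}+\frac{1}{3}\right). $$
The right-hand side strictly decreases in $k$, and its value at $k=2$ is exactly $\tfrac{13}{42}$, $\tfrac{7}{24}$, $\tfrac{9}{32}$, and $\tfrac{16}{63}+\tfrac{1}{9\cdot 2^{2\alpha-1}}$, i.e. precisely the four thresholds in the statement. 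Hence if $\psi''(G)$ exceeds its threshold, then $k<2$, contradicting $k\geq 2$; therefore $n$ cannot satisfy \eqref{Eq:1.2}, as claimed.

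The argument is essentially bookkeeping, the real content being imported from Theorem \ref{Thm:2.1}. The two points that deserve care are: (a) checking that $c_{\alpha}\cdot\tfrac{2^{2\alpha+1}+1}{3}$ collapses to the clean values of $e_{\alpha}$ — in particular verifying in the case $\alpha\geq 4$ the identity $\tfrac{2e_{\alpha}}{3\cdot 2^{2\alpha}}=\tfrac{16}{63}+\tfrac{1}{9\cdot 2^{2\alpha-1}}$ — and (b) noting that the hypothesis ``$n$ Carmichael'' enters only through ``$n$ odd'', which is exactly what legitimizes both $d\leq n/3$ for proper divisors and the multiplicativity split $\psi(C_{2^{\alpha}n})=\psi(C_{2^{\alpha}})\psi(C_n)$. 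Beyond this arithmetic there is no genuine obstacle.
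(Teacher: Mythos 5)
Your proposal is correct and follows essentially the same route as the paper: the paper carries out exactly this computation for the order-$4n$ case just before the theorem and then states that the other cases follow by applying the corresponding clauses of Theorem \ref{Thm:2.1} to groups of order $2^{\alpha}n$, which is precisely the bookkeeping you perform. Your verification that $e_{\alpha}/2^{2\alpha}\cdot\tfrac{2}{3}$ reproduces the four stated thresholds (including reading the missing factor $\psi(C_{2^{\alpha}n})$ into clause (v), an evident typo in the paper) confirms the intended argument.
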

We notice that unfortunately, (i) cannot be used together with Theorem \ref{Thm:2.3}. If this theorem could be replaced by some other (suitable) converse of Theorem \ref{Thm:2.2}, it would highly improve our bounds, because of (vi) in Theorem \ref{Thm:2.1}. Even an extension of (vi) to groups of order $4n$, $n$ odd, would guarantee some refinements of the bounds.\\
The above result will be often improved later, in order to derive better results. We start with the following:
\begin{proposition}\label{Prop:3.1}
Let $n$ be Carmichael, and let $q$ be its smallest prime factor. If there exists a group $G$ of order $4n$ such that:
$$ \psi''(G) > \frac{7}{16} \left( \frac{q-1}{2q} + \frac{1}{q} \right) $$
then $n$ does not satisfy \eqref{Eq:1.2}.
\end{proposition}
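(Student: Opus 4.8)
The plan is to run the argument given in the text just before Theorem \ref{Thm:3.1}, but with the crude estimate $d \le n/3$ on the proper divisors of $n$ replaced by the sharper $d \le n/q$ — and this sharpening is exactly what the smallest‑prime‑factor hypothesis buys us. First I would produce a noncyclic group $G$ of order $4n$: since $n$ is Carmichael it is odd, so by Proposition \ref{Prop:2.4} a noncyclic (in fact nilpotent) group of order $4n$ exists, and it is to such a $G$ that the hypothesis $\psi''(G) > \frac{7}{16}\bigl(\frac{q-1}{2q} + \frac{1}{q}\bigr)$ is applied. Writing $4n = 2^2 n$ with $\gcd(4,n) = 1$ and using the multiplicativity of $\psi$ together with the formula $\psi(C_m) = \prod_{p\mid m}\frac{p^{2\alpha_p+1}+1}{p+1}$, one gets $\psi(C_{4n}) = 11\,\psi(C_n)$, and then part (i) of Theorem \ref{Thm:2.1} gives $\psi(G) \le \tfrac{7}{11}\psi(C_{4n}) = 7\,\psi(C_n)$.

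Next I would expand $\psi(C_n) = \sum_{d\mid n} d\varphi(d)$, separating the top term $n\varphi(n)$ from the rest. Since $n$ satisfies \eqref{Eq:1.2} we have $\varphi(n) = (n-1)/k$; and since any divisor $d \ne n$ satisfies $n/d \ge q$, hence $d \le n/q$, Gauss's identity $\sum_{d\mid n}\varphi(d) = n$ yields
$$ \psi(C_n) \le \frac{n(n-1)}{k} + \frac{n}{q}\left(n - \frac{n-1}{k}\right). $$
Dividing through by $(4n)^2 = 16 n^2$ and simplifying gives the key bound
$$ \psi''(G) \le \frac{7}{16}\left(\frac{(n-1)(q-1)}{knq} + \frac{1}{q}\right). $$
Finally I would feed this into the hypothesis. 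Comparing it with $\psi''(G) > \frac{7}{16}\bigl(\frac{q-1}{2q} + \frac{1}{q}\bigr)$, cancelling the common factor $\frac{7}{16q}$ and the summand $\frac{1}{q}$, and then dividing by $\frac{q-1}{q} > 0$, the whole inequality collapses to $\frac{1}{2} < \frac{n-1}{kn}$, i.e. $kn < 2(n-1)$, i.e. $k < 2 - \frac{2}{n} < 2$. But any composite $n$ satisfying \eqref{Eq:1.2} forces $k \ge 2$ (otherwise $n$ would be prime), a contradiction; hence $n$ does not satisfy \eqref{Eq:1.2}.

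I do not expect a serious obstacle here: the computation is the introductory example essentially verbatim, with $n/3$ replaced by $n/q$. The two points that need a little care are (a) making sure a genuinely \emph{noncyclic} group of order $4n$ is at hand so that Theorem \ref{Thm:2.1}(i) applies — this is precisely Proposition \ref{Prop:2.4}, valid because Carmichael numbers are odd — and (b) not discarding the factor $\frac{n-1}{n}$ prematurely, since it is exactly the strict inequality $\frac{n-1}{n} < 1$ that pushes $k$ strictly below $2$ and produces the contradiction (replacing it by the bound $\frac{n-1}{n}\le 1$ would only give $k \le 2$, which is not enough).
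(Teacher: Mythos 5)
Your proposal is correct and follows essentially the same route as the paper: apply Theorem \ref{Thm:2.1}(i) to a noncyclic group of order $4n$, bound $\psi(C_n)$ using $\varphi(n)=(n-1)/k$ and $d\le n/q$ for proper divisors, and derive $k<2$, contradicting $k\ge 2$. The only (harmless) quibble is your closing remark (b): the strictness of $k<2$ already comes from the strict hypothesis $\psi''(G)>\frac{7}{16}\left(\frac{q-1}{2q}+\frac{1}{q}\right)$, so one may safely bound $n-1\le n$ as the paper does and still conclude $k<2$.
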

\begin{proof}
The idea of the proof is the same as before. Here, however, we use the assumption that $q$ is the smallest prime factor of $n$ to get a better estimate than the previous one. Proceeding as before:
$$ \psi(G) \leq 7 \cdot \left( \frac{n(n-1)}{k} + \frac{n}{q} \sum_{d|n, d \neq n}  \varphi(d) \right) =  7 \left( \frac{n(n-1)}{k} + \frac{n}{q} \left( n - \frac{(n-1)}{k} \right) \right) \leq  $$
$$ \leq 7 \left( \frac{(q-1)n^2}{qk} + \frac{n^2}{q} \right) $$
Now, by assumption, we get:
$$ \frac{7}{16} \left( \frac{q-1}{2q} + \frac{1}{q} \right)  < \frac{7}{16}  \left( \frac{q-1}{qk} + \frac{1}{q} \right) $$
But then:
$$ \frac{q-1}{2q} + \frac{1}{q} < \frac{q-1}{kq} + \frac{1}{q} $$
from which we derive:
$$ k < 2 $$
As before, the result follows.
\end{proof}
We now begin to study $\psi''$ in order to obtain the desidered lower bounds, at least under some conditions. Since by Proposition \ref{Prop:2.4} there is always at least one nilpotent noncyclic group of order $4n$ for $n$ odd, let $G$ be any such a group. The slight improvement of Theorem \ref{Thm:2.3}, namely Lemma \ref{Lm:2.1}, applies, and hence:
$$ \psi''(G) \geq \frac{n \varphi(n)}{2n^2} > \frac{\varphi(n)}{2n} $$
If we assume that Lehmer's totient problem does not hold for $n$, and so it satisfies \eqref{Eq:1.2}, we have $n > 10^{30}$, from which we can deduce that:
$$ \psi''(G) > \frac{n(n-1)}{2kn} > \frac{1}{2k} \left(1 - \frac{1}{10^{30}} \right) $$
Suppose $k =2$. Then, for $q \geq 11$:
$$ \psi''(G) > \frac{1}{4} \left( 1 - \frac{1}{10^{30}} \right) > \frac{7}{16} \left( \frac{q-1}{2q} + \frac{1}{q} \right) $$
But by Proposition \ref{Prop:3.1}, this implies that $n$ does not satisfy \eqref{Eq:1.2}, a contradiction. Hence, if any Carmichael number $n$ satisfies \eqref{Eq:1.2} and is not divisible by $3$, $5$ and $7$, $k \neq 2$. Furthermore, as noticed, for instance, in [3], if $3 |n$, then $k \equiv 1 (\md 3)$, which implies $k \geq 4$ for $n$ divisible by $3$. As a consequence:
\begin{proposition}\label{Prop:3.2}
If $n$ is a solution to \eqref{Eq:1.2}, and neither $5$ nor $7$ divides $n$, then $k \geq 3$.
\end{proposition}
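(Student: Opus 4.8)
The plan is to obtain Proposition \ref{Prop:3.2} as a bookkeeping consequence of the two facts established in the paragraph immediately above, by splitting into cases according to whether $3$ divides $n$. Before the split I would record that $k\geq 2$: any composite solution of \eqref{Eq:1.2} is a Lehmer number and hence Carmichael, so in particular $n$ is odd, composite, and $\varphi(n)\mid n-1$; then $k=1$ is impossible (it would give $\varphi(n)=n-1$, forcing $n$ prime), while $k\geq 1$ because $\varphi(n)\leq n-1$. So in every case it suffices either to rule out $k=2$ or to produce a stronger lower bound.

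First I would handle the case $3\mid n$. Here the congruence $k\equiv 1\pmod{3}$ recalled above (following [3]) together with $k\geq 2$ forces $k\geq 4$, and in particular $k\geq 3$. For the complementary case $3\nmid n$, the hypothesis of the proposition also gives $5\nmid n$ and $7\nmid n$, so $n$ is divisible by none of $3$, $5$, $7$. This is exactly the situation treated in the discussion preceding the statement: combining Lemma \ref{Lm:2.1}, the known bound $n>10^{30}$, and Proposition \ref{Prop:3.1} applied to the smallest prime factor $q$ of $n$, one gets $\psi''(G) > \tfrac14\bigl(1-10^{-30}\bigr) > \tfrac{7}{16}\bigl(\tfrac{q-1}{2q}+\tfrac1q\bigr)$ whenever $q\geq 11$, which is incompatible with $k=2$. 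Hence $k\neq 2$, and with $k\geq 2$ this yields $k\geq 3$. Since the two cases are exhaustive, $k\geq 3$ holds unconditionally.

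I do not expect a genuine obstacle in this argument; the substantive work — the elimination of $k=2$ via the function $\psi''$ — has already been carried out above, and the proposition is essentially its packaging. The one point deserving an explicit line is that, in the case $3\nmid n$, the smallest prime factor $q$ of $n$ is indeed $\geq 11$, so that Proposition \ref{Prop:3.1} applies in the form used: this is immediate, since $n$ is odd (being Carmichael) so $q\neq 2$, and $q\notin\{3,5,7\}$ by hypothesis, leaving $q\geq 11$.
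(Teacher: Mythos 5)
Your proposal is correct and follows the paper's own argument essentially verbatim: the case $3\mid n$ is handled by the congruence $k\equiv 1\pmod{3}$ from [3], and the case $3\nmid n$ (hence $q\geq 11$) by the elimination of $k=2$ via Lemma \ref{Lm:2.1} and Proposition \ref{Prop:3.1} carried out in the preceding paragraph. Your explicit remark that $q\geq 11$ in the second case is a worthwhile clarification, but the route is the same.
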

By [2], it immediately follows that any counterexample to Lehmer's totient problem which is not divisible by $5$ and $7$ must satisfy $n > 10^{8171}$ and $\omega(n) \geq 1991$.\\
We actually notice that this result can be proven in a slightly different way, obtaining even better bounds. As we said above, we can suppose $q \geq 5$, since the case $3 |n$ has already been settled. It is well known that the following inequality holds:
\begin{proposition}\label{Prop:3.3}
\begin{equation}\label{Eq:3.1}
1 > \frac{\varphi(n) \sigma(n)}{n^2} > \frac{6}{\pi^2}
\end{equation}
\end{proposition}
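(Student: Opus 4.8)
The plan is to reduce the whole quantity to a single product over the prime divisors of $n$, after which both bounds fall out of elementary estimates. Writing $n = \prod_{p \mid n} p^{\alpha_p}$, I would first recall the standard formulas $\varphi(n) = n \prod_{p \mid n}\bigl(1 - p^{-1}\bigr)$ and $\sigma(n) = \prod_{p \mid n} \frac{p^{\alpha_p + 1} - 1}{p - 1}$, so that $\frac{\sigma(n)}{n} = \prod_{p \mid n} \frac{1 - p^{-(\alpha_p+1)}}{1 - p^{-1}}$. Multiplying this by $\frac{\varphi(n)}{n} = \prod_{p \mid n}\bigl(1 - p^{-1}\bigr)$, the factors $1 - p^{-1}$ cancel termwise, leaving the clean identity
$$ \frac{\varphi(n)\sigma(n)}{n^2} = \prod_{p \mid n} \left( 1 - p^{-(\alpha_p + 1)} \right). $$

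The upper bound is then immediate: each factor lies strictly between $0$ and $1$, and for $n > 1$ there is at least one such factor, so the product is $< 1$. For the lower bound I would use $\alpha_p \geq 1$, hence $\alpha_p + 1 \geq 2$, so $1 - p^{-(\alpha_p+1)} \geq 1 - p^{-2}$ for every $p \mid n$; this gives $\frac{\varphi(n)\sigma(n)}{n^2} \geq \prod_{p \mid n}\bigl(1 - p^{-2}\bigr)$. Since every factor is $< 1$, enlarging the index set can only decrease the product, so $\prod_{p \mid n}\bigl(1 - p^{-2}\bigr) > \prod_{p}\bigl(1 - p^{-2}\bigr)$ with the product now over all primes (strict, because infinitely many primes do not divide $n$). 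Finally the Euler product for the Riemann zeta function yields $\prod_{p}\bigl(1 - p^{-2}\bigr)^{-1} = \zeta(2) = \pi^2/6$, so $\prod_{p}\bigl(1 - p^{-2}\bigr) = 6/\pi^2$, and the chain of inequalities closes.

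I do not expect any genuine obstacle here; the only points needing a little care are the strictness of the two inequalities — which is exactly where the hypothesis $n > 1$ and the infinitude of primes not dividing $n$ enter — and the invocation of the evaluation $\zeta(2) = \pi^2/6$ (the Basel problem). Should one prefer to avoid the zeta function, the alternative is to write $\prod_{p}\bigl(1 - p^{-2}\bigr) = \prod_{p} \frac{(p-1)(p+1)}{p^2}$ and bound it by a telescoping argument, but the Euler-product route is the cleanest and is what I would present.
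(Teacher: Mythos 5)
Your proof is correct. The paper actually offers no proof of this proposition at all --- it is simply asserted as ``well known'' --- and your argument (reducing $\varphi(n)\sigma(n)/n^2$ to $\prod_{p\mid n}\bigl(1-p^{-(\alpha_p+1)}\bigr)$, bounding below by $\prod_{p\mid n}(1-p^{-2})$ and then by $1/\zeta(2)=6/\pi^2$) is exactly the standard one the paper implicitly relies on, with the strictness points handled correctly.
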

In some special cases, the lower bound can be improved. For instance, for $n$ Carmichael and not divisible by $3$, we easily see that:
$$ 1 > \prod_{p|n} \left( 1 - \frac{1}{p^2} \right) > \prod_{p \geq 5} \left( 1 - \frac{1}{p^2} \right)= \frac{6}{\pi^2} \cdot \frac{4}{3} \cdot \frac{9}{8} = \frac{9}{\pi^2} $$
Now, the assumption that $k = 2$ implies $\sigma(n) < 2.0001 n$ (say). Indeed, if this were not true we would have, by the upper bound in Proposition \ref{Prop:3.3}:
$$ 2=k \geq 2.0001 \cdot \left( 1 - \frac{1}{10^{30}} \right) > 2$$
absurd. Therefore, by Lemma \ref{Lm:2.1} we can conclude that, for some noncyclic nilpotent group $G$ of order $4n$:
$$ \psi''(G) > \frac{\varphi(n)}{2n} > \frac{9}{2 \pi^2 \cdot 2.0001} $$
Some simple calculations show that this value is bigger than $\frac{7}{16} \left( \frac{q-1}{2q} + \frac{1}{q} \right)$ for $q \geq 11$. But then, we can improve the lower bound by multiplying by $\frac{25}{24}$, assuming that $5$ does not divide $n$. This way, we obtain:
$$ \psi''(G) > \frac{75}{8 \pi^2 \cdot 2.0001} $$
which is easily seen to be bigger than $\frac{7}{16} \left( \frac{q-1}{2q} + \frac{1}{q} \right)$ for $q \geq 7$. Thus, this method gives us an improvement of the previous bound, so that $k \geq 3$, except possibly when $5 | n$.\\
Now we proceed by improving, again, our previous result. The proof is almost the same as before, so we do not explicitely write it.
\begin{proposition}\label{Prop:3.4}
Let $n$ be Carmichael, with $q$ its smallest prime factor, and let $R \geq 2$ be a natural number. Then, if there exists a group $G$ of order $2n$ such that:
$$ \psi''(G) > \frac{7}{16} \left( \frac{q-1}{Rq} + \frac{1}{q} \right) $$
and $n$ satisfies \eqref{Eq:1.2}, $k \leq R-1$.
\end{proposition}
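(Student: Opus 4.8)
The plan is to run the argument of Proposition~\ref{Prop:3.1} essentially verbatim, the only change being that the constant $2$ appearing there is replaced by the parameter $R$. So, assuming that $n$ satisfies \eqref{Eq:1.2} and that a group $G$ as in the statement exists, I would take $G$ to be a noncyclic group of order $4n$ --- this is the order for which part~(i) of Theorem~\ref{Thm:2.1} produces the constant $\frac{7}{16}$ occurring in the hypothesis --- and apply that part to get $\psi(G)\le\frac{7}{11}\psi(C_{4n})$. Since $n$ is odd (being Carmichael), multiplicativity of $\psi$ (Proposition~\ref{Prop:2.3}) together with the product formula $\psi(C_m)=\prod_{p\mid m}\frac{p^{2\alpha_p+1}+1}{p+1}$ gives $\psi(C_{4n})=\psi(C_4)\,\psi(C_n)=11\,\psi(C_n)$, hence $\psi(G)\le 7\,\psi(C_n)$.

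Next I would peel off the top term of $\psi(C_n)=\sum_{d\mid n}d\varphi(d)$: using $\varphi(n)=(n-1)/k$ from \eqref{Eq:1.2} this reads $\psi(C_n)=\frac{n(n-1)}{k}+\sum_{d\mid n,\,d\ne n}d\varphi(d)$, and since $q$ is the smallest prime factor of $n$ every proper divisor satisfies $d\le n/q$; combining this with Gauss's identity $\sum_{d\mid n}\varphi(d)=n$ yields $\sum_{d\mid n,\,d\ne n}d\varphi(d)\le\frac{n}{q}\bigl(n-\frac{n-1}{k}\bigr)$. Substituting and using $n(n-1)<n^2$ gives $\psi(G)\le 7\bigl(\frac{(q-1)n^2}{qk}+\frac{n^2}{q}\bigr)$, so that, dividing by $(4n)^2$,
$$\psi''(G)\ \le\ \frac{7}{16}\left(\frac{q-1}{qk}+\frac{1}{q}\right).$$
Comparing this with the hypothesis $\psi''(G)>\frac{7}{16}\bigl(\frac{q-1}{Rq}+\frac1q\bigr)$ forces $\frac{q-1}{Rq}<\frac{q-1}{qk}$, i.e. $k<R$, and since $k$ is an integer this is $k\le R-1$. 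Taking $R=2$ recovers Proposition~\ref{Prop:3.1}.

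I do not expect a genuine obstacle: the content is just the bookkeeping above, and it is already carried out in the proof of Proposition~\ref{Prop:3.1}. The two points that deserve a line of care are that Theorem~\ref{Thm:2.1}(i) requires $G$ noncyclic --- no restriction in the intended applications, where $G$ is the noncyclic nilpotent group of order $4n$ furnished by Proposition~\ref{Prop:2.4} --- and that every inequality in the chain points the same way, so that the strict inequality of the hypothesis is preserved all the way down to the comparison involving $k$. One should also keep in mind that the conclusion is only informative when $R$ is small enough that the right-hand side $\frac{7}{16}\bigl(\frac{q-1}{Rq}+\frac1q\bigr)$ can actually be surpassed by a value of $\psi''$ coming from Lemma~\ref{Lm:2.1}; determining how large $R$ may be taken as a function of $q$ is exactly what the subsequent refinements do.
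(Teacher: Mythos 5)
Your proposal is correct and is exactly the argument the paper intends: the paper itself omits the proof of Proposition~\ref{Prop:3.4}, saying only that it is ``almost the same'' as that of Proposition~\ref{Prop:3.1}, which is precisely the computation you carry out with $2$ replaced by $R$. You also rightly read the order as $4n$ (the ``$2n$'' in the statement is evidently a slip, since the constant $\tfrac{7}{16}$ and the subsequent Corollary~\ref{Crl:3.1} both require order $4n$) and correctly flag that $G$ must be noncyclic for Theorem~\ref{Thm:2.1}(i) to apply.
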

\begin{corollary}\label{Crl:3.1}
Let $n$ be Carmichael, $5 \not | n$, with $q$ its smallest prime factor. Then, if there exists a noncyluc group $G$ of order $4n$ such that:
$$ \psi''(G) > \frac{7}{16} \left( \frac{q-1}{3q} + \frac{1}{q} \right) $$
$n$ does not satisfy \eqref{Eq:1.2}.
\end{corollary}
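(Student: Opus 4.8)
\emph{Proof proposal.} The plan is to run exactly the argument of Proposition \ref{Prop:3.1}, but with the constant $2$ replaced by $3$ — this is the $R=3$ instance of the mechanism behind Proposition \ref{Prop:3.4} — and then to invoke the bound $k\geq 3$ that is already available when $5\nmid n$. So suppose, for contradiction, that $n$ is Carmichael, $5\nmid n$, that $n$ satisfies \eqref{Eq:1.2}, and that a noncyclic group $G$ of order $4n$ with $\psi''(G)>\frac{7}{16}\left(\frac{q-1}{3q}+\frac{1}{q}\right)$ exists. Applying (i) of Theorem \ref{Thm:2.1} to $G$, and using that $n$ is odd — so that $\gcd(4,n)=1$ and hence, by multiplicativity (Proposition \ref{Prop:2.3}), $\psi(C_{4n})=\psi(C_4)\psi(C_n)=11\,\psi(C_n)$ — we obtain $\psi(G)\leq\frac{7}{11}\psi(C_{4n})=7\,\psi(C_n)$.

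Next I would expand $\psi(C_n)=\sum_{d\mid n}d\varphi(d)$ and isolate the term $d=n$, which equals $n\varphi(n)=\frac{n(n-1)}{k}$ by \eqref{Eq:1.2}. For the proper divisors, since $n$ is squarefree and odd with smallest prime factor $q$, every such $d$ satisfies $d\leq n/q$, so $\sum_{d\mid n,\,d\neq n}d\varphi(d)\leq\frac{n}{q}\sum_{d\mid n,\,d\neq n}\varphi(d)=\frac{n}{q}\left(n-\frac{n-1}{k}\right)$. Combining the two contributions and bounding $n-1<n$ throughout gives
$$ \psi(G)\leq 7\left(\frac{(q-1)n^2}{qk}+\frac{n^2}{q}\right), $$
and dividing by $|G|^2=16n^2$ yields
$$ \psi''(G)\leq\frac{7}{16}\left(\frac{q-1}{qk}+\frac{1}{q}\right). $$
Chaining this with the hypothesis $\psi''(G)>\frac{7}{16}\left(\frac{q-1}{3q}+\frac{1}{q}\right)$ and doing the short manipulation (subtract $\frac{7}{16q}$, divide by the positive quantity $\frac{7(q-1)}{16q}$) gives $\frac{1}{3}<\frac{1}{k}$, i.e. $k\leq 2$.

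Finally, this contradicts what has already been established about $k$ when $5\nmid n$: if $3\mid n$ then $k\equiv 1\,(\md 3)$ forces $k\geq 4$, while if $3\nmid n$ and $5\nmid n$ the refined abundancy-index argument preceding Proposition \ref{Prop:3.2} gives $k\geq 3$; in either case $k\geq 3$. Hence no such group $G$ can exist, which is precisely the claim. I do not expect a genuine obstacle here: the only points needing care are that the bound $d\leq n/q$ on proper divisors is legitimate (it is, since $n$ is squarefree and odd, so a proper divisor omits at least one prime $\geq q$), that the constant $\frac{7}{16}$ is exactly the one produced by Theorem \ref{Thm:2.1}(i) together with $\psi(C_4)=11$ and $|G|^2=16n^2$, and that the inputs yielding $k\geq 3$ for $5\nmid n$ are indeed in force at this stage of the paper.
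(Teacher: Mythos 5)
Your proposal is correct and follows essentially the same route as the paper: the paper's one-line proof simply cites Proposition \ref{Prop:3.4} (whose proof is exactly your ``Proposition \ref{Prop:3.1} with $2$ replaced by $3$'' computation) together with Proposition \ref{Prop:3.2} and the refinement proved just after it, which is precisely the $k\geq 3$ input you invoke for $5\nmid n$. The only difference is that you unfold the computation explicitly and work with order $4n$ throughout, which is in fact cleaner than the paper's statement of Proposition \ref{Prop:3.4} (where ``order $2n$'' appears to be a slip, since the constant $\tfrac{7}{16}$ corresponds to $|G|=4n$).
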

\begin{proof}
This follows from Proposition \ref{Prop:3.4} together with Proposition \ref{Prop:3.2} and the extension proved below it.
\end{proof}
Hence, we can apply the same idea as above. Again by Lemma \ref{Lm:2.1}, assuming $k=3$:
$$ \psi''(G) > \frac{\varphi(n)}{2n} > \frac{1}{6} \cdot \left( 1 - \frac{1}{10^{8171}} \right) > \frac{7}{16} \left( \frac{q-1}{3q} + \frac{1}{q} \right) $$
for $q \geq 17$. Thus, by Corollary \ref{Crl:3.1}, for $q \geq 17$ (and $q=3$), $k \geq 4$. \\
We now deal with the case $q=5$. Suppose that $5 | n$ and $3 \not | n$, $7 \not | n$ (if $3|n$, we are done; if $7|n$, we have another case, which will be considered later). We proceed as in the proof of Proposition \ref{Prop:3.1}, but with a slight modification. We have:
$$ \psi(G) \leq 7 \cdot \left( \frac{n(n-1)}{k} + \frac{n(n-1)}{5k \cdot \varphi(5)} + \frac{n}{11} \sum_{d|n, d \neq n, n/5}  \varphi(d) \right) = $$
$$= 7 \left( \frac{n(n-1)}{k}  + \frac{n(n-1)}{20k} + \frac{n}{11} \left( n - \frac{(n-1)}{k} - \frac{n(n-1)}{20k} \right) \right) \leq  $$
$$ \leq 7 \left( \frac{21n^2}{22k} + \frac{n^2}{11} \right) $$
If there exists a noncyclic nilpotent group $G$ of order $4n$ such that
$$ \psi''(G) > \frac{175}{704} $$
then, by the above inequality:
$$ k < 2 $$
so that, by Proposition  \ref{Prop:3.1}, $n$ does not satisfy \eqref{Eq:1.2} and we are done. But clearly:
$$ \frac{1}{4} \left( 1 - \frac{1}{10^{30}} \right) > \frac{175}{704} $$
so that, if $5 | n$ and $7 \not | n$, $k \geq 3$.\\
If instead $5|n$, $7|n$ and $3 \not | n$, $13 \not |n$, we get, similarly (noting that $n$ Carmichael and $5|n$ implies $11 \not | n$, by [1]):
$$ \psi(G) \leq 7 \cdot \left( \frac{n(n-1)}{k} + \frac{n(n-1)}{5k \cdot \varphi(5)} + \frac{n(n-1)}{7k \cdot \varphi(7)} + \frac{n}{17} \sum_{d|n, d \neq n, n/5, n/7}  \varphi(d) \right) = $$
$$= 7 \left( \frac{n(n-1)}{k}  + \frac{n(n-1)}{20k} + \frac{n(n-1)}{42k} + \frac{n}{17} \left( n - \frac{(n-1)}{k} - \frac{n(n-1)}{20k} - \frac{n(n-1)}{42k} \right) \right) \leq  $$
$$ \leq 7 \left( \frac{1804n^2}{1785k} + \frac{n^2}{17} \right) $$
If there exists a noncyclic nilpotent group $G$ of order $4n$ such that
$$ \psi''(G) > \frac{1007}{4080} $$
then, by the above inequality:
$$ k < 2 $$
so that, by Proposition  \ref{Prop:3.1}, $n$ does not satisfy \eqref{Eq:1.2} and we are done. But again, this value is less than $\frac{1}{4} \left( 1 - \frac{1}{10^{30}} \right)$, and hence we only need to consider the case $5, 7, 13 | n$, $3 \not | n$. But, this time, by using the same method as above, we finally get that $k \geq 3$, without imposing other restrictions (i.e. other divisors of $n$). Hence, $k \geq 3$ whenever $n$ satisfies \eqref{Eq:1.2}.\\
Consequently, we have shown that:
\begin{proposition}\label{Prop:3.5}
If $n$ satisfies \eqref{Eq:1.2}, then $k \geq 3$. If $q=3$ or $q \geq 17$, moreover, $k \geq 4$
\end{proposition}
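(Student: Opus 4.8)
The plan is to obtain Proposition~\ref{Prop:3.5} by assembling the case analysis carried out in the discussion above into a single statement, organised according to the smallest prime factor $q$ of $n$. Since any $n$ satisfying \eqref{Eq:1.2} must be Carmichael, it is odd and square-free by Korselt's criterion (Proposition~\ref{Prop:1.1}), so $q \geq 3$; and by Proposition~\ref{Prop:2.4} there is always a noncyclic nilpotent group $G$ of order $4n$ to which Lemma~\ref{Lm:2.1} and Proposition~\ref{Prop:3.1} (and its variants) apply. The proof is then a matter of collecting the estimates already established, checking that every residual case is covered, and recording the outcome.

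First I would establish $k \geq 3$ in all cases. The case $q = 3$ is immediate: the congruence $k \equiv 1 \ (\md 3)$ (see~[3]) already forces $k \geq 4$. For $q \geq 5$ with $5, 7 \nmid n$ this is exactly Proposition~\ref{Prop:3.2} together with the refinement proved beneath it, which in fact disposes of every $n$ with $3, 5 \nmid n$. It remains to treat $5 \mid n$ (hence $3 \nmid n$), which I would split further into (a) $5 \mid n$, $7 \nmid n$; (b) $5, 7 \mid n$, $13 \nmid n$; and (c) $5, 7, 13 \mid n$. In each sub-case one isolates the divisors $n/5$ (and, where present, $n/7$ and $n/13$) in the sum $\sum_{d \mid n,\, d \neq n} d\varphi(d)$, bounds the remaining terms by the appropriate multiple $n/q'$ of $n$ for the smallest still-admissible prime $q'$ (namely $q' = 11, 17, \dots$, using that $5 \mid n$ Carmichael forces $11 \nmid n$ by~[1]), and arrives at a threshold for $\psi''(G)$ — respectively $\tfrac{175}{704}$, $\tfrac{1007}{4080}$, and the analogous fraction in case (c) — above which Proposition~\ref{Prop:3.1} yields $k < 2$. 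Lemma~\ref{Lm:2.1} gives $\psi''(G) > \varphi(n)/2n > \tfrac14\bigl(1 - 10^{-30}\bigr)$, using $n > 10^{30}$, and in each case this exceeds the relevant threshold; hence $k \geq 3$ unconditionally.

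For the sharper conclusion, the case $q = 3$ is already settled ($k \geq 4$). For $q \geq 17$ I would argue by contradiction: if $k = 3$, then $n > 10^{8171}$ by~[2] (Corollary~\ref{Crl:1.1}), so Lemma~\ref{Lm:2.1} gives $\psi''(G) > \varphi(n)/2n = \tfrac16\bigl(1 - \tfrac1n\bigr) > \tfrac16\bigl(1 - 10^{-8171}\bigr)$; since $q \geq 17$ implies $5 \nmid n$, an elementary comparison shows that $\tfrac16\bigl(1 - 10^{-8171}\bigr) > \tfrac{7}{16}\bigl(\tfrac{q-1}{3q} + \tfrac1q\bigr)$ precisely when $q \geq 17$, and Corollary~\ref{Crl:3.1} then forces $n$ not to satisfy \eqref{Eq:1.2}, a contradiction. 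Combining the two halves gives the statement.

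I expect the main obstacle to be the bookkeeping in sub-case (c): one must correctly identify the smallest divisor of $n$ distinct from $n$, $n/5$, $n/7$, $n/13$ in order to bound the truncated sum $\sum \varphi(d)$ by the right multiple of $n$, and one must verify via Proposition~\ref{Prop:1.1} and the results of~[1] exactly which small primes remain admissible as divisors of $n$ at that stage, so that the resulting threshold for $\psi''(G)$ is genuinely below $\tfrac14\bigl(1 - 10^{-30}\bigr)$ with no further hypothesis on $n$. Everything else is a routine repetition of the estimate used in the proof of Proposition~\ref{Prop:3.1}.
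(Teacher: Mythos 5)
Your proposal reproduces the paper's own argument in essentially the same form: the same case split on the smallest prime factor $q$ (the congruence $k\equiv 1\ (\mathrm{mod}\ 3)$ for $q=3$; Lemma~\ref{Lm:2.1} combined with Proposition~\ref{Prop:3.1} and its variants with thresholds $\tfrac{175}{704}$ and $\tfrac{1007}{4080}$ for the sub-cases with $5\mid n$; and Corollary~\ref{Crl:3.1} with $n>10^{8171}$ to push $q\geq 17$ up to $k\geq 4$). The bookkeeping issue you flag in sub-case (c) is real --- the paper itself only asserts that ``the same method'' works there without exhibiting the threshold --- but your route is otherwise identical to the paper's.
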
 
We can now proceed as before. The following result is straightforward:
\begin{theorem}\label{Thm:3.2}
Suppose that $n$ satisfies \eqref{Eq:1.2}. If $q \geq 17$ is the smallest prime factor of $n$ and $R \geq 4$ is an integer, then, assuming that:
$$ \frac{1}{2} R \left( \frac{q-1}{Rq} + \frac{1}{q} \right) < 1 $$
we have $k \geq R+1$.
\end{theorem}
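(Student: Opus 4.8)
The plan is to run the argument of Proposition~\ref{Prop:3.1}, Proposition~\ref{Prop:3.4} and Corollary~\ref{Crl:3.1} once more, now with the free integer $R$ in place of the small bounds on $k$ used there. So, I would assume for a contradiction that $n$ satisfies \eqref{Eq:1.2} with $k \leq R$. Since $q \geq 17$, Proposition~\ref{Prop:3.5} already gives $k \geq 4$, hence in particular $k \geq 3$, so $n > 10^{8171}$ by [2]. By Proposition~\ref{Prop:2.4} there is a noncyclic nilpotent group $G$ of order $4n$, and since $n$ is squarefree (being Carmichael) Lemma~\ref{Lm:2.1} applies to this very $G$.

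I would then estimate $\psi''(G)$ from both sides, both bounds referring to this one group $G$. From above, exactly as in Proposition~\ref{Prop:3.1}: by Theorem~\ref{Thm:2.1}(i), $\psi(G) \leq \frac{7}{11}\psi(C_{4n}) = 7\,\psi(C_n)$; writing $\psi(C_n) = \frac{n(n-1)}{k} + \sum_{d \mid n,\, d \neq n} d\varphi(d)$, using $\sum_{d \mid n,\, d \neq n} \varphi(d) = n - \frac{n-1}{k}$, and bounding every proper divisor of $n$ by $n/q$, this gives
$$ \psi''(G) \;=\; \frac{\psi(G)}{16 n^2} \;\leq\; \frac{7}{16}\left( \frac{q-1}{qk} + \frac{1}{q} \right). $$
From below, Lemma~\ref{Lm:2.1} together with \eqref{Eq:1.2} and $n > 10^{8171}$ gives
$$ \psi''(G) \;>\; \frac{\varphi(n)}{2n} \;=\; \frac{n-1}{2kn} \;>\; \frac{1}{2k}\left( 1 - \frac{1}{10^{8171}} \right). $$
Combining the last two displays and clearing denominators turns the existence of such an $n$ into a strict lower bound for $k$ depending only on $q$ (together with, by a harmless amount of size about $10^{-8170}$, the lower bound for $n$); since $k \leq R$, the same lower bound is forced on $R$. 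Verifying that this is exactly the failure of the displayed hypothesis $\frac{1}{2}R\left(\frac{q-1}{Rq}+\frac{1}{q}\right) < 1$ is the single calculational step, and it is here that the precise numerical shape of the hypothesis is used; the $10^{-8170}$ slack is irrelevant since $R$ is an integer and the hypothesis is strict. With the hypothesis in force one therefore reaches a contradiction, whence $k \geq R+1$.

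I expect no genuine obstacle --- the proof is essentially bookkeeping --- but a few points need care. The lower and the upper bound on $\psi''$ must be applied to one and the same noncyclic nilpotent group of order $4n$, which is why everything is routed through Proposition~\ref{Prop:2.4}, Lemma~\ref{Lm:2.1} and Theorem~\ref{Thm:2.1}(i), all of which concern order $4n$; the order-$2n$ statement of Theorem~\ref{Thm:3.1}(i) cannot be used here, for the reason recorded just after Theorem~\ref{Thm:3.1} (a noncyclic group of order $2n$ with $n$ odd and squarefree cannot be nilpotent, so Theorem~\ref{Thm:2.3} has no grip on it). The passage from ``$k \leq R$'' to the extreme value $k = R$ rests on the monotonicity of $k \mapsto q^{-1}(q-1+k)$. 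Finally, one could state a sharper (if less tidy) result by replacing the crude bound $d \leq n/q$ on proper divisors with the divisor-peeling used in the $5, 7, 13 \mid n$ cases, or by passing to groups of order $2^{\alpha}n$ and the corresponding parts of Theorem~\ref{Thm:2.1}.
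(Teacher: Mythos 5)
Your overall route is the paper's own: pin both the Lemma~\ref{Lm:2.1} lower bound and the Theorem~\ref{Thm:2.1}(i) upper bound on one and the same noncyclic nilpotent group of order $4n$ (silently and correctly reading the ``$2n$'' of Proposition~\ref{Prop:3.4} as the $4n$ that the constant $\frac{7}{16}$ presupposes, as in Corollary~\ref{Crl:3.1}). The genuine gap sits exactly in the step you defer as ``the single calculational step'': it does not close, and the theorem as displayed is not reachable by this method. Combining your two displays and multiplying through by $2kq$ gives $q\bigl(1-10^{-8171}\bigr) < \frac{7}{8}\,(q-1+k)$, i.e.\ $k > \frac{q}{7}+1-\frac{8q}{7}\cdot 10^{-8171}$. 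Equivalently, the argument excludes $k \leq R$ only when $\frac{7}{8}R\left(\frac{q-1}{Rq}+\frac{1}{q}\right) < 1$, that is, only for $R < \frac{q+7}{7}$. The stated hypothesis $\frac{1}{2}R\left(\frac{q-1}{Rq}+\frac{1}{q}\right) < 1$ simplifies to $q-1+R < 2q$, i.e.\ $R \leq q$, which is weaker by essentially a factor of $7$. Concretely: $q=17$, $R=10$ satisfies the stated hypothesis (it reads $\frac{26}{34}<1$), so the theorem would assert $k \geq 11$, whereas the two bounds you combine yield only $k > \frac{17}{7}+1 \approx 3.43$, i.e.\ $k \geq 4$. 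No bootstrap rescues this: the Lemma~\ref{Lm:2.1} lower bound $\frac{1}{2k}\left(1-\frac{1}{n}\right)$ gets \emph{weaker} as $k$ grows, so the direct combination above already extracts everything these two inequalities contain, and iterating Proposition~\ref{Prop:3.4} downward stalls at the same threshold $R < \frac{q+7}{7}$.

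To be fair, the defect originates in the paper's own statement of Theorem~\ref{Thm:3.2}, whose proof is precisely the one-line instruction you followed; what the method actually proves is the statement with hypothesis $\frac{7}{8}R\left(\frac{q-1}{Rq}+\frac{1}{q}\right) < 1$, i.e.\ roughly $k \geq \left\lceil \frac{q+7}{7}\right\rceil$, which for $q=17$ recovers Proposition~\ref{Prop:3.5} and for which the case $R=4$ already forces $q \geq 23$. Everything else in your write-up is sound (the upper bound $\psi''(G) \leq \frac{7}{16}\bigl(\frac{q-1}{kq}+\frac{1}{q}\bigr)$, the use of $n>10^{8171}$, the remark that the order-$2n$ route is unavailable because a noncyclic group of order $2n$ with $n$ odd and squarefree cannot be nilpotent). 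The error is the assertion that the arithmetic ``is exactly the failure of the displayed hypothesis''; you must either tighten the hypothesis as above or supply an upper bound on $\psi''$ genuinely stronger than $\frac{7}{16}\left(\frac{q-1}{kq}+\frac{1}{q}\right)$, which Theorem~\ref{Thm:2.1}(i) does not give.
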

\begin{proof}
Just combine Proposition \ref{Prop:3.4} and Lemma \ref{Lm:2.1} as in the previous proofs.
\end{proof}
Before discussing our new method in the last section, we prove Proposition \ref{Prop:1.2}. The way is similar to the one used in the improvement of Proposition \ref{Prop:3.3}. In general we know that $q \geq 3$ and $k \geq 3$, so that:
$$ \frac{\varphi(n) \sigma(n)}{n^2} > \frac{8}{\pi^2} $$
and hence:
$$ I(n)=\frac{\sigma(n)}{n} > \frac{n}{n-1} \cdot \frac{24}{\pi^2} > \frac{24}{\pi^2}$$
If $3, 5, 7, 11, 13 \not | n$, $k \geq 4$ and:
$$ \frac{\varphi(n) \sigma(n)}{n^2} > \frac{8}{\pi^2} \cdot \frac{9}{8} \cdot \frac{25}{24} \cdot \frac{49}{48} \cdot \frac{121}{120} \cdot \frac{169}{168} $$
from which we deduce:
$$ I(n) > \frac{715715}{18432 \pi^2} $$
The conclusion follows.
\section{Discussion of our method and future research}
As our results show, our new approach via the means of group theory to Lehmer's totient problem turns out to be quite powerful. It is clear that the study of the properties of $\psi''$, and in particular the solution to the problem posed in [7] by T\u{a}rn\u{a}uceanu would probably lead to some interesting consequences related to Lehmer's problem. But we also think that other questions may be approached by our new method, namely many of the problems involving Euler totient function. We do not know if such improvements may lead to a complete to Lehmer's totient problem, though: $\psi''(C_m)$ can be really small (and in fact, $\psi''$ restricted to cyclic groups has a dense image in $[0,1]$, as shown in [11]). However, it may be possible that groups of order $4n$ with $n$ Carmichael, or maybe even other kinds of group, could share some special property for which $\psi''$ always satisfies some inequality that would guarantee that a similar result to ours holds true, implying the entire Lehmer's totient problem. In any case, we think that our idea will stimulate further research on the subject via this new group-theoretical approach.

\end{document}